\tikzset{unseulcoin fill/.style={append after command={
   \pgfextra
        \draw[sharp corners, fill=#1, color=#1, line width = 0mm]%
    (\tikzlastnode.west)%
    [rounded corners=0pt] |- (\tikzlastnode.north)%
    [rounded corners=0pt] -| (\tikzlastnode.east)%
    [rounded corners=0pt] |- (\tikzlastnode.south)%
    [rounded corners=5pt] -| (\tikzlastnode.west);
   \endpgfextra}}}
\tikzset{black node/.style = {draw=black,fill=black,circle, inner sep=0, minimum size=0.15cm}}
\crefname{rule}{}{}
\newtheorem{theorem}{Theorem}[section]
\newtheorem{corollary}[theorem]{Corollary}
\newtheorem{proposition}[theorem]{Proposition}
\newtheorem{lemma}[theorem]{Lemma}
\newtheorem{remark}[theorem]{Remark}
\newtheorem{claim}[theorem]{Claim}
\def\cqedsymbol{\ifmmode$\lrcorner$\else{\unskip\nobreak\hfil
\penalty50\hskip1em\null\nobreak\hfil$\lrcorner$
\parfillskip=0pt\finalhyphendemerits=0\endgraf}\fi}
\newcommand{\set}[1]{\left\{#1\right\}}
\newcommand{\R}{\mathcal{R}}
\newcommand{\C}{\mathcal{C}}
\newcommand{\csp}[1]{{\sf c_{sp}}(\overrightarrow{#1})}
\newenvironment{proofofclaim}{%
  \proof}{\endproof}
\newcommand{\CR}{\textsc{Cops and Robber}\xspace}
\let\leq\leqslant
\title[Pushing Cops and Robber]{Pushing Cops and Robber on Graphs of Maximum Degree 4}
\author[H.~Gahlawat]{Harmender Gahlawat}
\address[H.~Gahlawat]{Universit\'e Clermont Auvergne, CNRS, Clermont Auvergne INP, Mines Saint-\'Etienne, LIMOS, Clermont-Ferrand, 63000, France}
\email{harmendergahlawat@gmail.com}
\date{\today}
\begin{document}

\begin{abstract}
\textsc{Cops and Robber} is a game played on graphs where a set of \textit{cops} aim to \textit{capture} the position of a single \textit{robber}. The main parameter of interest in this game is the \textit{cop number}, which is the minimum number of cops that are sufficient to guarantee the capture of the robber. 

In a directed graph $\overrightarrow{G}$, the \textit{push} operation on a vertex $v$ reverses the orientation of all arcs incident to $v$. We consider a variation of the classical \textsc{Cops and Robber} on oriented graphs, where in its turn, each cop can either move to an out-neighbor of its current vertex or push some vertex of the graph, whereas, the robber can move to an out-neighbor in its turn. [Das et al., CALDAM, 2023] introduced this variant and established that if $\overrightarrow{G}$ is an orientation of a subcubic graph, then one cop with push ability has a winning strategy. We extend these results to establish that if $\overrightarrow{G}$ is an orientation of a $3$-degenerate graph, or of a graph with maximum degree $4$, then one cop with push ability has a winning strategy. Moreover, we establish that if $\overrightarrow{G}$ can be made to be a directed acyclic graph, then one cop with push ability has a winning strategy. 
\end{abstract}
\maketitle
\section{Introduction}\label{S:intro}
\CR is a well-studied pursuit-evasion game, where a set of cops pursue a single robber.  
We study a variant of \CR on oriented graphs. Classically, the game in the oriented setting has the following rules. The game starts with the cops placing themselves on the vertices of an oriented graph $\overrightarrow{G}$, and multiple cops may simultaneously occupy the same vertex of the graph. Then the robber chooses a vertex to start. Now the cops and the robber make alternating moves, beginning with the cops. In a cop move, each cop can either stay on the same vertex or move to a vertex in its out-neighborhood. In the robber move, the robber does the same. If at some point in the game, one of the cops occupies the same vertex as the robber, we call it the \textit{capture}. The cops win if they can capture the robber in a finite number of rounds, and if the robber can evade the capture forever, then the robber wins.

\smallskip
The \textit{cop number} $c(\overrightarrow{G})$ of an oriented graph $\overrightarrow{G}$ is the minimum number of cops needed by the Cop Player to have a winning strategy. 
We say that $\overrightarrow{G}$  is \textit{cop-win} if one cop has a winning strategy in $\overrightarrow{G}$. Most research in oriented (or directed) graphs considers the model defined above. However, there has been some research concerning variations of the game in oriented graphs~\cite{das2021cops}.

Let $\overrightarrow{uv}$ be an arc of an oriented\footnote{An oriented graph is a directed graph without self-loops and 2-cycles.} graph $\overrightarrow{G}$. 
We say that $u$ is an \textit{in-neighbor} of $v$ and $v$ is an \textit{out-neighbor} of $u$.
Let $N^-(u)$ and $N^+(u)$ denote the set of in-neighbors and out-neighbors of $u$, respectively.  Moreover, let $N^+[v] = N^+(v) \cup \{v\}$ and $N^-[v] = N^-(v) \cup \{v\}$.
A vertex without any in-neighbor is a \textit{source}, and a vertex without any out-neighbor is a \textit{sink}. A vertex $v$ is said to be \textit{dominating} if $N^+[v] = V(\overrightarrow{G})$. For a vertex $v$, the \textit{push operation on $v$}, denoted by $push(v)$, reverses the orientation of each arc incident to $v$.  
We remark that the push operation is a well-studied modification operation on directed or oriented graphs~\cite{fisher-push, klostermeyer1, klostermeyer2, Mosesian, Pretzel1, Pretzel2, Pretzel3, Garyandwood}. In this work, for convenience and for the sake of better readability, we retain the name of an oriented graph even after some vertices have been pushed, allowing a slight abuse of notation. However, there is no scope of confusion to the best of our knowledge.

~\cite{das2023cops} initiated the study of \CR on oriented graphs with respect to the push operation where the players can have the ability to push the vertices of the graph.  They defined two kinds of push ability.

\begin{enumerate}

    \item \textit{Weak push}: Let $A$ be an agent (cop/robber) having the weak push ability, and let $A$ be on a vertex $v$. Then in its turn, $A$ can either move to a vertex $u \in N^+[v]$ or can push the vertex $v$.
    
    \item \textit{Strong push}: Let $A$ be an agent (cop/robber) having the strong push ability, and let $A$ be on a vertex $v$. Then in its turn, $A$ can either move to a vertex $u \in N^+[v]$ or can push any vertex of the graph. 
\end{enumerate}

We are interested in graph classes which become cop-win when the cops have ability to push the vertices but have higher cop number otherwise. It is a straightforward observation that unless $\overrightarrow{G}$ has a dominating vertex, one cop even with the strong push ability cannot win in $\overrightarrow{G}$ against a robber that has weak push ability. To win against a single cop, the robber will push the position of the cop whenever the cop is on an in-neighbor of the robber's current position. 
Hence, in this work, we restrict our attention to the variations where the robber does not have the push ability, but the cops either have the strong push ability or the weak push ability. We would also like to note here that if neither cops nor the robber has the push ability, then this game is equivalent to the classical \CR game on oriented graphs.

In this work we consider oriented graphs whose underlying graphs are  finite and connected. In the classical \CR, we assume connectedness because the cop number of a graph is the sum of the cop numbers of its connected components. Moreover, notice that a disconnected graph cannot be cop-win even if the cop has the strong push ability. Recall that oriented graphs do not admit $2$-cycles. If we allow parallel arcs with opposite orientation, then even for simple graphs such as $4$-cycle with arcs both ways between consecutive vertices, one cop with push ability does not have a winning strategy.

Let $\csp{G}$ be the cop number of $\overrightarrow{G}$ when the cops have the strong push ability and let $\mathsf{c_{wp}}(\overrightarrow{G})$ be the cop number of $\overrightarrow{G}$ when the cops have the weak push ability. It is easy to see that $\mathsf{c_{sp}}(\overrightarrow{G}) \leq \mathsf{c_{wp}}(\overrightarrow{G})$. Das et al.~\cite{das2023cops} also observed that if $\overrightarrow{G}$ is an orientation of a complete multipartite graph,  then $\csp{G} = 1$. 

\smallskip
\noindent\textbf{Push operation.} Pushing vertices of an oriented graph to obtain an orientation that admits some desirable property is well-studied~\cite{klostermeyer1}. 
In particular, it is NP-complete to decide if, using the push operation, an oriented graph can be made a directed acyclic graph (DAG), a strongly connected digraph, or a semi-connected digraph~\cite{klostermeyer1}, and  if an oriented graph can be made Hamiltonian~\cite{klostermeyer2}.  Heard and Huang~\cite{heard2009kernel} established that it is NP-complete to decide if an oriented graph can be made to contain no directed cycle of odd length using the push operation. Moreover, it is NP-complete to decide if an undirected graph is an underlying graph of an oriented graph whose vertices can be pushed to obtain an oriented clique\footnote{an \textit{oriented clique} is an oriented graph where each pair of non-adjacent vertices is connected by a directed 2-path.}~\cite{bensmail2017oriented}. Klostermeyer and {\v{S}}olt{\'e}s~\cite{klostermeyer2000extremal} established that sufficiently large tournaments have high connectivity and can be pushed to obtain an exponential number of Hamiltonian cycles. Pushing operation has been extensively studied in the theory of coloring digraphs and homomorphism of digraphs~\cite{sen2017homomorphisms,bensmail2021pushable,bensmail2023pushable}.

Special attention has been provided to the problem of deciding if  an input oriented graph can be made a DAG using the push operation. Huang, MacGillivray, and Wood~\cite{huang2001pushing} characterized, in terms of forbidden subdigraphs, the multipartite tournaments which can be made acyclic using the push operation. Huang, MacGillivray, and Yeo~\cite{huang2002pushing} proved that the problem remains NP-complete even when restricted to bipartite digraphs and characterized, in terms of two forbidden subdigraphs, the chordal digraphs and bipartite permutation digraphs which can be made acyclic using the push operation. For the forbidden subdigraph characterization for bipartite permutation graphs, Rizzi~\cite{rizzi2006acyclically} provided a polynomial time algorithm. 

Observe that if a digraph $\overrightarrow{G}$ can be made a DAG with a single source vertex  using the push operation, then $\csp{G} = 1$ as the cop can start at the vertex designated to be the source vertex and push the graph to be a DAG with a single source over the next few rounds. Then, the cop can capture $\R$ in this DAG. We begin with establishing that this observation extends to every oriented graph that can be made a DAG using the push operation. To prove this result, we establish that if we can push a digraph to be a DAG, then we can push it to be a DAG with a single source (in \cref{T:DAG}). In particular, we have the following theorem concerning DAGs. 
\begin{restatable}{theorem}{PushDag}\label{C:DAG}
    If $\overrightarrow{G}$ can be made a DAG using the push operation, then $\csp{G} = 1$.
\end{restatable}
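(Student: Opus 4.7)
The plan is to combine \cref{T:DAG} with the observation that one cop starting at the unique source of a DAG can always capture the robber. I would first invoke \cref{T:DAG} to obtain a sequence $\sigma = (u_1, u_2, \ldots, u_k)$ of vertex pushes that transforms $\overrightarrow{G}$ into a DAG $\overrightarrow{D}$ with a unique source $s$. The cop places itself at $s$; on each of its first $k$ turns, instead of moving along an arc, it uses the strong push ability to perform $\mathrm{push}(u_i)$. After round $k$, the oriented graph equals $\overrightarrow{D}$, the cop is still at $s$, and the robber occupies some vertex different from $s$ (moving onto $s$ during the push phase would constitute a capture).

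It therefore suffices to show that one cop starting at the unique source of a DAG always captures the robber. Since $s$ is the only source and the underlying graph is connected, every vertex of $\overrightarrow{D}$ is a descendant of $s$, so from its current position the cop can always extend its trajectory into a directed path reaching the robber. The cop's strategy is to move one step per turn along such a path toward the robber's current position. The key termination argument relies on the longest-path depth function $d(v)$ from $s$ in $\overrightarrow{D}$: because $\overrightarrow{D}$ is acyclic, the robber's depth strictly increases on every actual move, so after at most $|V(\overrightarrow{D})|$ robber moves the robber is cornered at a sink, after which the cop finishes the chase in finitely many additional steps along a directed path. An equivalent formulation is a dismantling-style induction on $|V(\overrightarrow{D})|$: remove any sink $t$; the remaining graph is still a DAG with unique source $s$; a short case split handles the possibility that the robber visits $t$.

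The only non-trivial ingredient beyond \cref{T:DAG} is the formal termination argument for the chase inside $\overrightarrow{D}$. The push phase itself is mechanically safe: the cop does not move during those rounds, the robber cannot escape the vertex set, and at the end of round $k$ the graph has been reconfigured exactly into $\overrightarrow{D}$ with the cop still sitting at its unique source. Hence the proof reduces to \cref{T:DAG} together with a finite-descent argument in a single-source DAG, and the main conceptual work has already been carried out in \cref{T:DAG}.
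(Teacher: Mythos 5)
Your proposal takes essentially the same route as the paper: push $\overrightarrow{G}$ to a DAG, invoke \cref{T:DAG} to make it a single-source DAG with source $s$, start the cop at $s$, execute the push sequence over the first rounds, and then win the chase in the resulting single-source DAG --- the only difference being that the paper outsources that final chase to a citation of~\cite{das2021cops} while you sketch it yourself. The one assertion in your sketch that needs an explicit line is that the robber stays reachable from the cop \emph{after} the cop leaves $s$ (reachability from $s$ alone does not give this): it holds because if the cop advances one step along a directed path to the robber and the robber then traverses an arc, appending that arc yields a directed walk from the cop to the robber which, by acyclicity, is again a directed path; together with your depth argument, which bounds the number of actual robber moves, this completes the termination proof.
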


Thus, the class of directed graphs where a cop with strong push ability can win is a superclass of graphs that can be made DAGs using the push operation. We are interested in this class of graphs. Das et al.~\cite{das2023cops} established that orientations of subcubic and of interval graphs are in this class of graphs, i.e., if $\overrightarrow{G}$ is an orientation of a subcubic or an interval graph, then $\csp{G} = 1$. To establish their result concerning subcubic graphs, Das et al.~\cite{das2023cops} proved the following, which shall be useful to us as well.

\begin{proposition}[\cite{das2023cops}]\label{P:degenerate}
   Let $v$ be a vertex of an oriented graph $\overrightarrow{G}$ such that $|N^+(v)|+|N^-(v)| \leq 3$. Moreover, let $\overrightarrow{H}$ be the induced subgraph of $\overrightarrow{G}$ obtained by deleting $v$. If $\csp{H} =1$, then $\csp{G}=1$.
\end{proposition}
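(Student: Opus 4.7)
The plan is to construct a winning strategy for the cop on $\overrightarrow G$ by simulating the cop's winning strategy $\sigma_H$ for $\overrightarrow H$ against a ``virtual robber'' $\tilde r \in V(\overrightarrow H)$, while leveraging the strong push ability on $v$. I would place the cop at the starting vertex of $\sigma_H$ and define $\tilde r$ to equal the real robber's position whenever it lies in $V(\overrightarrow H)$, and to equal the vertex $a \in N(v)$ from which the robber most recently entered $v$ otherwise. The cop then plays $\sigma_H$ as though the robber were at $\tilde r$; since $\overrightarrow H$ is an induced subdigraph of $\overrightarrow G$, the moves and pushes prescribed by $\sigma_H$ remain valid in $\overrightarrow G$.

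The only way the simulation can break is when the real robber leaves $v$ to some $b \in N(v) \setminus \{a\}$, as the induced teleport of the virtual robber from $a$ to $b$ need not be a legal move in $\overrightarrow H$. To prevent this, I would exploit the degree bound $|N^+(v)| + |N^-(v)| \le 3$. A short case analysis on the orientation of $v$'s arcs shows that, by pushing $v$ and at most one of its neighbors on the cop's first couple of turns, one can reach a configuration where $v$ has a unique in-neighbor $a$. Whenever the robber subsequently enters $v$, the cop pushes $v$ on its immediate next turn; this flips all arcs incident to $v$, so $v$'s unique out-neighbor becomes $a$. The robber at $v$ can then only stay at $v$ or return to $a$, both consistent with the virtual robber remaining at $a$. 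Consequently $\sigma_H$ eventually captures $\tilde r$, at which point either the real robber coincides with the cop (a real capture) or the real robber is at $v$ with the cop at $a$; in the latter case, a single push of $v$ if needed, followed by the move $a \to v$, completes the capture.

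The hard part is justifying that the preparatory push performed on a neighbor $d$ of $v$ does not invalidate $\sigma_H$, since such a push also flips all other arcs of $\overrightarrow H$ incident to $d$. I would address this via the general observation that if the cop wins $\overrightarrow H$ with strong push, then the cop also wins any orientation obtained from $\overrightarrow H$ by pushing a single vertex: in the pushed orientation, the cop can re-push that vertex on its first turn, restoring the original orientation of $\overrightarrow H$, and then execute $\sigma_H$. The delicate point is that naively re-pushing $d$ also flips the arc $dv$ and so undoes the favorable configuration at $v$; I would compensate by an additional push of $v$ together with a finite case analysis---enabled by $|N(v)|\le 3$---to identify a schedule of pushes that simultaneously preserves both the validity of $\sigma_H$ in the modified $\overrightarrow H$ and the invariant ``$v$ has a unique in-neighbor $a$''. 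Since the degree of $v$ is bounded by a constant, this coordination introduces only a constant overhead, and the cop captures the real robber in finitely many rounds.
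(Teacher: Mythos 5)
The paper itself does not prove this proposition (it is imported from Das et al.~\cite{das2023cops}), so I am judging your argument against what a correct proof must do. Your overall plan---run a winning strategy $\sigma_H$ for $\overrightarrow{H}$ against a shadow robber and use pushes of $v$ to control excursions to $v$---is the right one, but the proof as written has two genuine gaps, both caused by your decision to precondition $v$ so that it has a unique in-neighbor $a$ and to treat that as a standing invariant. First, the preparatory push of a neighbor $d$ of $v$ is never actually resolved: you concede that re-pushing $d$ to restore $\overrightarrow{H}$ also re-flips the arc $dv$ and undoes the configuration at $v$, and the promised ``schedule of pushes'' fixing both at once is precisely the circular point, since every correction at $d$ undoes the work at $v$ and vice versa. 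Second, the invariant is destroyed by your own strategy the first time it fires: when the robber enters $v$ and you push $v$, the former out-neighbors of $v$ (up to two of them) become in-neighbors, so on a later visit the robber may enter $v$ from some $b\neq a$, and after you push $v$ it can exit to a third neighbor $c$---an illegal teleport of the shadow from $b$ to $c$. The invariant is also silently broken whenever $\sigma_H$ itself pushes a neighbor of $v$ in mid-game, which flips one arc incident to $v$; you only account for the single set-up push.

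All of this evaporates if you drop the invariant and instead use the degree bound \emph{pointwise, at the moment the robber steps onto $v$}. The vertex $a$ it just came from lies in $N^-(v)$, and $|N^+(v)|+|N^-(v)|\le 3$, so either $|N^+(v)|\le 1$, in which case the cop traps the robber at $v$ outright (\cref{L:trivial} together with \cref{P:trap}; preserving $\sigma_H$ no longer matters), or $|N^+(v)|=2$, which forces $N^-(v)=\{a\}$ exactly, so pushing $v$ makes $a$ the unique out-neighbor and bounces the robber straight back to $a$. Pushing $v$ flips no arc of $\overrightarrow{H}$, and the shadow's two consecutive moves compose to ``stay at $a$'', so a positional winning strategy $\sigma_H$ resumes from a position it recognizes. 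This needs no set-up phase, no pushes of neighbors of $v$ beyond those $\sigma_H$ prescribes, and no invariant; the dichotomy is re-derived from the current orientation at each visit to $v$. Your endgame (cop on the shadow, robber hiding at $v$) is then handled by the same trapping argument.
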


The above result argues that removal of vertices with degree at most $3$ does not change the outcome of the game considered similarly to how deletion of \textit{corners}\footnote{A vertex $v$ is a corner if there is a vertex $u$ such that $N[v]\subseteq N[u]$.} does not change the outcome of the game in undirected graphs.  Observe that \cref{P:degenerate} can be used directly to extend the result for subcubic graphs to 3-degenerate graphs. Although it is not mentioned explicitly in~\cite{das2023cops}, the result should be considered a corollary of their result. We still provide a proof for the sake of completeness.
\begin{corollary}\label{T:3-degenrate}
    Let $\overrightarrow{G}$ be an oriented graph such that its underlying graph $G$ is a 3-degenerate graph. Then, $\csp{G} = 1$.
\end{corollary}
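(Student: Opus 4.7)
The plan is a straightforward induction on $|V(\overrightarrow{G})|$, using \cref{P:degenerate} as the inductive engine. The key observation is that 3-degeneracy is a hereditary property: every induced subgraph of a 3-degenerate graph is 3-degenerate, and by definition every 3-degenerate graph contains at least one vertex of degree at most 3.

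For the base case, if $|V(\overrightarrow{G})| = 1$, the single cop starts on that vertex and captures the robber trivially, so $\csp{G} = 1$.

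For the inductive step, assume the claim holds for all oriented graphs on fewer than $n$ vertices whose underlying graph is 3-degenerate, and let $\overrightarrow{G}$ be such a graph on $n$ vertices. Since $G$ is 3-degenerate, there exists a vertex $v \in V(G)$ with $\deg_G(v) \leq 3$. Because $\overrightarrow{G}$ is an orientation of $G$, we have $|N^+(v)| + |N^-(v)| = \deg_G(v) \leq 3$. Let $\overrightarrow{H} = \overrightarrow{G} - v$; its underlying graph $H = G - v$ is still 3-degenerate (induced subgraphs of 3-degenerate graphs are 3-degenerate). By the induction hypothesis, $\csp{H} = 1$. Now apply \cref{P:degenerate} with this $v$ and $\overrightarrow{H}$ to conclude $\csp{G} = 1$.

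There is essentially no obstacle here, since \cref{P:degenerate} has already done the real work of handling the extra vertex; this corollary only organizes the repeated application via the degeneracy ordering. The sole point that needs to be mentioned is the hereditary nature of 3-degeneracy, which justifies that the induction hypothesis applies to $\overrightarrow{H}$.
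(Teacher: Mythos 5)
Your proposal is correct and takes essentially the same approach as the paper: both arguments delete a vertex of degree at most $3$ and invoke \cref{P:degenerate}, relying on the hereditary nature of $3$-degeneracy; the paper merely phrases the induction as a minimal-counterexample argument rather than an explicit induction on $|V(\overrightarrow{G})|$.
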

\begin{proof}
Consider a minimal graph $ \overrightarrow{G}$ such that its underlying graph $G$ is 3-degenerate and $\csp{G}>1$ (i.e., for every proper induced subgraph $\overrightarrow{H}$ of $\overrightarrow{G}$,  $\csp{H} = 1$). Note that $\overrightarrow{G}$ contains at least two vertices as a single vertex graph is trivially cop-win.  Since $G$ is a 3-degenerate graph, there is at least one vertex $v\in V(\overrightarrow{G})$ such that $|N^+(v)|+|N^-(v)| \leq 3$. Let $\overrightarrow{H}$ be the induced subgraph of $\overrightarrow{G}$ we get after deleting the vertex $v$. Since $\overrightarrow{G}$ is a minimal graph with $\csp{G}>1$, we have that $\csp{H} = 1$. Then, due to \cref{P:degenerate}, $\csp{G} =1$, which contradicts our assumption that $\csp{G}>1$. 
Therefore, if $\overrightarrow{G}$ is an orientation  of a 3-degenerate  graph, then $\csp{G} =1$.
\end{proof} 

One can obtain using Euler's formula that triangle-free planar graphs are 3-degenerate. Moreover, outerplanar graphs are 2-degenerate~\cite{outDegenerate} and 3-dimensional grids are 3-degenerate. Thus, \cref{T:3-degenrate} immediately adds these graph classes to the list of graph classes whose orientations are cop-win if the cop has the strong push ability.



We further extend the result concerning subcubic graphs to establish that if $\overrightarrow{G}$ is an orientation of a graph $G$ with maximum degree $4$, then $\csp{G} = 1$ in \cref{S:maxdeg}.  We remark that subcubic graphs (and graphs of bounded degree) are well-studied in the case of undirected graphs as well and they are shown to have unbounded cop number~\cite{hosseini2021meyniel} and even are shown to be Meyniel-extremal\footnote{Meyniel's conjecture~\cite{frankl1987cops} asserts that the cop number of any undirected graph is $\mathcal{O}(\sqrt{n})$. A graph class is said to be Meyniel-extremal if it contains some graphs with cop number $\Omega(\sqrt{n})$.}. 
To prove our result, we establish (in \cref{S:maxdeg}) that if $\overrightarrow{G}$ is an orientation of a $4$-regular graph, then $\csp{G} =1$. In particular, we have the following lemma.

\begin{restatable}{lemma}{Tregular}\label{T:4-regular}
    If $\overrightarrow{G}$ is an orientation of a $4$-regular graph, then $\csp{G} = 1$.
\end{restatable}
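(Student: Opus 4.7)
The plan is to prove \cref{T:4-regular} by induction on $|V(\overrightarrow{G})|$, using the theorem on oriented graphs of maximum degree at most $4$ (which follows from \cref{T:4-regular} via \cref{P:degenerate}) as the inductive hypothesis for smaller instances. The base case is trivial, so for the inductive step fix a vertex $u$ of $\overrightarrow{G}$ with $N(u) = \{a,b,c,d\}$, and let the cop start at a neighbor $p \in N(u)$.

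The cop's strategy proceeds in two phases. In the \emph{setup phase}, over at most four turns, the cop pushes the current out-neighbors of $u$ one at a time. Each such push of an out-neighbor $x$ of $u$ flips only the arcs incident to $x$; in particular, it reverses the arc $u \to x$ into $x \to u$ while leaving every other arc at $u$ untouched. After at most four turns, every arc incident to $u$ points into $u$, so $u$ is a sink of $\overrightarrow{G}$. During the setup the robber moves freely, but if it enters $u$, it becomes trapped as soon as $u$ is a sink, and the cop captures by moving $p \to u$.

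In the \emph{main phase}, assume the robber is on $V \setminus \{u\}$. Let $\overrightarrow{H} := \overrightarrow{G}[V \setminus \{u\}]$. Then $\overrightarrow{H}$ has maximum degree at most $4$, and each of $a,b,c,d$ has degree exactly $3$ in $\overrightarrow{H}$; by the inductive hypothesis together with \cref{P:degenerate}, $\csp{H} = 1$. The cop emulates a winning strategy for $\overrightarrow{H}$ within $\overrightarrow{G}$, refraining from pushing any vertex of $N[u]$ so that the sink property of $u$ (and thus the confinement of the robber to $V\setminus\{u\}$) is preserved. Any push of a vertex outside $N[u]$ is safe and has the same effect on $\overrightarrow{H}$ as on $\overrightarrow{G}$. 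If at any later moment the robber moves to $u$, it is trapped, and the cop finishes by traversing a directed path ending in an arc into $u$.

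The main obstacle is to justify the main phase: showing that the winning strategy on $\overrightarrow{H}$ can always be implemented in $\overrightarrow{G}$ without pushing any vertex of $N[u]$. The recursive strategy on $\overrightarrow{H}$ obtained from \cref{P:degenerate} may in principle call for pushes of its degree-$3$ vertices $a,b,c,d$ or of their neighbors in $\overrightarrow{H}$, which in $\overrightarrow{G}$ would flip an arc to $u$ and destroy the sink property. My plan is to argue that, whenever the $\overrightarrow{H}$-strategy requires a push of some $w \in N(u)$, the cop can substitute an equivalent sequence of pushes whose net effect on $\overrightarrow{H}$ matches the intended action but leaves the neighborhood of $u$ unchanged (for instance, by pairing the push of $w$ with a corrective push that restores $u$'s sink status), and that the additional turns so introduced do not give the robber an exploitable escape. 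Establishing that such a substitution is always possible, and that the robber cannot break out of $V \setminus \{u\}$ during the transition, is the technical heart of the proof.
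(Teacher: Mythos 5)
There is a genuine gap, and it sits exactly where you locate it: the claim that the winning strategy on $\overrightarrow{H}=\overrightarrow{G}[V\setminus\{u\}]$ can be emulated in $\overrightarrow{G}$ without disturbing the sink $u$. The proposed repair --- ``pairing the push of $w\in N(u)$ with a corrective push that restores $u$'s sink status'' --- cannot work, because the push operation is not fine-grained enough to admit such a correction. Once you push $w\in N(u)$, the arc $\overrightarrow{wu}$ becomes $\overrightarrow{uw}$; the only two pushes that can reverse that single arc are $push(w)$, which undoes the entire intended effect of the move on $\overrightarrow{H}$, and $push(u)$, which simultaneously flips the other three arcs at $u$ and turns $u$ into a near-source, after which restoring the sink requires pushing the remaining neighbors of $u$ and thereby corrupting $\overrightarrow{H}$ further. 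So there is no net-neutral substitution, and the strategy for $\overrightarrow{H}$ produced recursively via \cref{P:degenerate} genuinely does need to push the degree-$3$ vertices $a,b,c,d$ (they are precisely the vertices that \cref{P:degenerate} deletes at the next level of the recursion, and its proof hinges on manipulating them). Even setting this aside, each ``substituted sequence'' costs extra cop turns, and the robber's extra replies in between desynchronize the simulation from the $\overrightarrow{H}$-strategy, which is defined against strictly alternating play. A secondary issue: $G-u$ may be disconnected, in which case $\csp{H}=1$ is false as stated and the inductive hypothesis must be applied only to the robber's component, with an argument that the cop can reach it.

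For contrast, the paper does not induct on $|V(\overrightarrow{G})|$ at all. It proves \cref{L:regular}: the cop maintains, by induction on the number of moves played and an exhaustive case analysis on the out-neighborhood of the robber's current vertex, the invariant that every vertex ever visited by $\R$ has out-degree at most $1$; whenever the invariant cannot be maintained, the case analysis shows $\C$ traps $\R$ outright. Finiteness then forces $\R$ to revisit a visited vertex, where \cref{L:trivial} and \cref{P:trap} finish. Your decomposition would only become viable if you could prove a strengthened inductive statement of the form ``the cop wins on $\overrightarrow{H}$ without ever pushing a prescribed set of boundary vertices,'' which is a substantially stronger claim than \cref{T:maxdegree} and is not established by anything in your proposal.
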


Then, using \cref{T:4-regular}, \cref{P:degenerate}, and \cref{T:3-degenrate}, we obtain the following result.

\begin{restatable}{theorem}{Tmaxdegree}\label{T:maxdegree}
    Let $\overrightarrow{G}$ be an oriented graph and $G$ be its underlying graph. If $\Delta(G) \leq 4$, then $\csp{G} = 1$.
\end{restatable}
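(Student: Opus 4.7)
The plan is to reduce to \cref{T:4-regular} via an iterative application of \cref{P:degenerate}, organised as an induction on $n = |V(\overrightarrow{G})|$. The base case $n = 1$ is immediate, since a single-vertex graph is trivially cop-win. For the inductive step I would assume the statement holds for every oriented graph on fewer than $n$ vertices whose underlying graph has maximum degree at most~$4$.

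Given $\overrightarrow{G}$ with $\Delta(G) \leq 4$, I would split into two cases according to whether $\overrightarrow{G}$ contains a low-degree vertex. If some $v \in V(\overrightarrow{G})$ satisfies $|N^+(v)| + |N^-(v)| \leq 3$, let $\overrightarrow{H} := \overrightarrow{G} - v$. Deleting a vertex cannot increase the maximum degree, so $\Delta(H) \leq 4$, and the induction hypothesis yields $\csp{H} = 1$; \cref{P:degenerate} then lifts this conclusion to $\csp{G} = 1$. In the complementary case, every vertex of $\overrightarrow{G}$ has total degree at least $4$; combined with the hypothesis $\Delta(G) \leq 4$, this forces every vertex to have total degree exactly $4$, i.e., $G$ is $4$-regular. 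Invoking \cref{T:4-regular} directly gives $\csp{G} = 1$, closing the induction.

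Conceptually, this is a standard peeling argument: repeatedly shave off vertices of degree at most $3$ (each peel is free by \cref{P:degenerate}), and if anything survives, what remains is necessarily $4$-regular and handled by the dedicated lemma. The substantive difficulty of the present theorem is therefore not in this reduction — which is essentially a one-line combinatorial observation — but is entirely packaged into \cref{T:4-regular}, whose proof must exhibit an explicit winning strategy for a single cop with strong push ability on arbitrary orientations of $4$-regular graphs. The reduction above is the right way to isolate that combinatorially awkward regular case from the generic degeneracy-driven argument available in all other situations.
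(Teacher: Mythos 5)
Your proof is correct and takes essentially the same route as the paper, which derives the theorem in a single line from \cref{T:4-regular}, \cref{P:degenerate}, and \cref{T:3-degenrate}; your induction (peel a vertex of degree at most $3$ via \cref{P:degenerate} whenever one exists, and fall back on \cref{T:4-regular} when the graph is forced to be $4$-regular) is exactly that reduction written out. If anything, your version is marginally cleaner, since the induction subsumes \cref{T:3-degenrate} and makes explicit why the only case not handled by peeling is the $4$-regular one.
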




\medskip
\noindent\textbf{Related Work.}
The \CR game is well studied on both directed and undirected graphs. Hamidoune \cite{hamidoune} considered the game on Cayley digraphs. 
Frieze et al.~\cite{lohjgt} studied the game on digraphs and gave an upper bound of $\mathcal{O}\left(\frac{n(\log \log n)^2}{\log n}\right)$ for cop number in digraphs. 
Loh and Oh~\cite{loh} considered the game on strongly connected planar digraphs and proved that every $n$-vertex strongly connected planar digraph has cop number $\mathcal{O}(\sqrt{n})$. Moreover, they constructively proved the existence of a strongly connected planar digraph with cop number greater than three, which is in contrast to the case of undirected graphs where the cop number of a planar graph is at most three~\cite{fromme1984game}. 
The computational complexity of determining the cop number of a digraph (and undirected graphs also) is a challenging question in itself. Goldstein and Reingold~\cite{exptime} proved that deciding whether $k$ cops can capture a robber is EXPTIME-complete for a variant of \CR and conjectured that the same holds for the classical \CR as well. 
Later, Kinnersley~\cite{kinnersley2015cops} proved that conjecture and established that determining the cop number of a graph or digraph is EXPTIME-complete. 
Kinnersley~\cite{kinnersley2018bounds} also showed that $n$-vertex strongly connected cop-win digraphs can have capture time $\Omega(n^2)$, whereas for undirected cop-win graphs the capture time is at most $n-4$ moves~\cite{gavenvciak2010cop}. 

Hahn and MacGillivray~\cite{hahn} gave an algorithmic characterization of the cop-win finite reflexive digraphs and showed that any $k$-cop game can be reduced to $1$-cop game, resulting in an algorithmic characterization for $k$-copwin finite reflexive digraphs. 
However, these results do not give a structural characterization of such graphs. 
Darlington et al.~\cite{darlington2016cops} tried to structurally characterize cop-win oriented graphs and gave a conjecture that was later disproved by Khatri et al.~\cite{khatri2018study}, who also studied the game in oriented outerplanar graphs and line digraphs. Moreover, several variants of the game on directed graphs depending on whether the cop/robber player has the ability to move only along or both along and against the orientations of the arcs are also studied~\cite{das2021cops}. Gahlawat, Myint, and Sen~\cite{gahlawat2023cops} considered these variants under the classical operations like subdivisions and retractions, and also established that all these games are NP-hard even for $2$-degenerate bipartite graphs with arbitrary high girth.

Recently, the cop number of planar Eulerian digraphs and related families was studied in several articles \cite{mohar2,   mohar}. In particular, Hosseini and Mohar~\cite{mohar} considered the orientations of integer grid that are vertex-transitive and showed that at most four cops can capture the robber on arbitrary finite quotients of these directed grids. De la Maza et al.~\cite{mohar2} considered the \textit{straight-ahead} orientations of 4-regular quadrangulations
of the torus and the Klein bottle and proved that their cop number is bounded
by a constant. They also showed that the cop number of every
$k$-regularly oriented
toroidal grid is at most 13.

Bradshaw et al.~\cite{bradshaw2021cops} proved that the cop number of directed and undirected Cayley graphs on abelian groups is $\mathcal{O}(\sqrt{n})$, and constructed families of graphs and digraphs which are Cayley graphs of abelian groups with cop number $\Theta(\sqrt{n})$ for every $n$. The family of digraphs thus obtained has the largest cop number in terms of $n$ of any known digraph construction.

\section{Preliminaries}\label{S:prelim}
In this paper, we consider the game on oriented graphs whose underlying graph is simple, finite, and connected. Let $\overrightarrow{G}$ be an oriented graph with $G$ as the underlying undirected graph of $\overrightarrow{G}$. We say that $\overrightarrow{G}$ is an \textit{orientation} of $G$. We consider the push operation on the vertices of $\overrightarrow{G}$, and hence the orientations of arcs in $\overrightarrow{G}$ might change. We generally use $\overrightarrow{G}$ to refer to the graph G with its orientation at some specific point in the game. Note that although the orientations of the arcs in $\overrightarrow{G}$ might change, the underlying graph $G$ remains the same. Moreover, it is worth noting that given $\overrightarrow{G}$ and $\overrightarrow{H}$ such that $\overrightarrow{G}$ and $\overrightarrow{H}$ have the same underlying graph, it might be possible that there is no sequence of pushing vertices in $\overrightarrow{G}$ that yields $\overrightarrow{H}$. 

Given an undirected graph $G$ and $v\in V(G)$, let $N(v) = \set{u~|~uv\in E(G)}$ and $N[v] = N(v)\cup \set{v}$. Observe that if $\overrightarrow{G}$ is an orientation of $G$, then $N(v) = N^+(v) \cup N^-(v)$. The \textit{degree} of a vertex $v$, denoted $d(v)$ is $|N(v)|$. The \textit{maximum degree} of $G$, denoted $\Delta(G)$,  is $\max_{v\in V(G)} d(v)$.  A graph $G$ is \textit{subcubic} if $\Delta(G) \leq 3$. Let $k\in \mathbb{N}$. A graph $G$ is said to be $k$-\textit{regular} if for each $v\in V(G)$, $d(v) = k$. A graph $G$ is $k$-\textit{degenerate} if any induced subgraph of $G$ contains at least one vertex of degree at most $k$.  

Let $v$ be a vertex of $\overrightarrow{G}$ and $S$ be a subset of vertices of $\overrightarrow{G}$ (i.e., $S\subseteq V(\overrightarrow{G})$). Then, we say that $v$ is a {\em source in $S$} if $S\subseteq N^+[v]$. Moreover, we say that $|N^+(v)|$ is the \textit{out-degree} of $v$, $|N^-(v)|$ is the \textit{in-degree} of $v$, and $|N^+(v)|+|N^-(v)|$ is the \textit{degree} of $v$. A vertex $w$ is said to be \textit{reachable from} $u$ if either $w=u$ or there is a directed path from $u$ to $w$. 

When we have a single cop, we denote the cop by $\C$. We denote the robber by $\R$ throughout the paper. 
If $\R$ is at a vertex $v$ such that $N^+(v) = 0$, then we say that $\R$ is \textit{trapped} at $v$. The following result shall be useful to us.
\begin{proposition}[\cite{das2023cops}]\label{P:trap}
    Let $\overrightarrow{G}$ be an oriented graph and $\R$ is trapped at a vertex $v\in V(\overrightarrow{G})$. Then, a cop with (strong or weak) push ability can capture $\R$ in a finite number of rounds.
\end{proposition}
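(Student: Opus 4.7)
The plan is to give an explicit strategy by which a cop with weak push ability reaches $v$ in a bounded number of rounds, while ensuring that $v$ remains trapped so that the robber is forced to stay there. Since every weak-push action is also available to a cop with strong push ability, the same strategy settles both cases simultaneously.

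Since the underlying graph $G$ is connected and finite, I would fix a shortest undirected path $u_0, u_1, \dots, u_k = v$ from the cop's current position $u_0$ to $v$. The cop then traverses this path one edge at a time. While at $u_i$ with $i<k$, if the arc between $u_i$ and $u_{i+1}$ is oriented $u_i \to u_{i+1}$, the cop advances to $u_{i+1}$; otherwise, the cop performs a weak push on its own vertex $u_i$, which flips every arc incident to $u_i$ and in particular reorients the relevant arc as $u_i \to u_{i+1}$, after which the cop advances on the following turn.

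The main obstacle is to verify that $v$ remains trapped throughout this procedure, so that on every robber turn the only legal action for $\R$ is to stay at $v$. The set $N^+(v)$ can change only when $v$ itself is pushed or when a neighbor of $v$ in $G$ is pushed; but the only pushes performed are of the cop's own current vertex $u_i$. If some $u_i$ with $i < k-1$ were adjacent to $v$, then $u_0, \dots, u_i, v$ would be a strictly shorter $u_0$--$v$ path, contradicting the minimality of the chosen path. The remaining case is $i = k-1$; here, however, because $v$ is trapped, the unique arc between $u_{k-1}$ and $v$ is necessarily oriented $u_{k-1} \to v$, so the cop need not push at $u_{k-1}$ and instead simply moves to $v$, capturing $\R$.

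Each ``forward'' arc on the path costs one cop turn and each ``wrong-way'' arc costs at most two (one push followed by one move), so the cop reaches $v$ in at most $2k$ rounds, capturing $\R$. This proves the proposition for a cop with weak push ability and, as noted, also for a cop with strong push ability.
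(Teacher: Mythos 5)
Your argument is correct. The paper itself gives no proof of this proposition---it is imported from Das et al.~\cite{das2023cops}---so there is nothing to compare against, but your strategy is the natural one and all the delicate points are handled: you only ever push the cop's own current vertex (so the strategy is legal even under weak push), the shortest-path minimality guarantees that no pushed vertex $u_i$ with $i<k-1$ is adjacent to $v$, and the trapped condition $N^+(v)=\emptyset$ forces the last arc to already be oriented $u_{k-1}\to v$, so $v$'s in-neighborhood is never disturbed and $\R$ can never move. The only hypothesis you use silently is the paper's standing convention that $\R$ has no push ability (otherwise $\R$ could push $v$ and escape); it would be worth stating that explicitly, but it does not affect correctness.
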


\section{Graphs Pushable to be DAGs}
In this section, we establish that if $\overrightarrow{G}$ can be made a DAG using the push operations, then $\csp{G} = 1$. To this end, first we prove the following lemma. 
\begin{lemma}\label{L:pushDAG}
    Let $\overrightarrow{G}$ be a DAG and $u$ be a source vertex of $\overrightarrow{G}$. Moreover, let $X$ and $Y\neq \emptyset$ be the sets of vertices that are reachable from $u$ and  not reachable from $u$, respectively. Then, we can push some vertices of $Y$ such that:
    \begin{enumerate}
        \item $\overrightarrow{G}$ is still a DAG and $u$ is still a source vertex in $\overrightarrow{G}$,
        \item each vertex in $X$ is still reachable from $u$, 
        \item and at least one vertex in $Y$ is reachable from $u$.
    \end{enumerate}
\end{lemma}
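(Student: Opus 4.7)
My plan is to push every vertex of $Y$ exactly once (that is, take the pushed set to be all of $Y$) and then verify the three conditions by direct inspection; I don't expect to need induction or a cleverer selection of vertices.

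The first step is the routine observation that, because $X$ is closed under reachability from $u$ in the current DAG $\overrightarrow{G}$, no arc presently goes from $X$ to $Y$: such an arc would place its $Y$-endpoint inside $X$. Consequently every arc between $X$ and $Y$ points from $Y$ to $X$. Pushing every vertex of $Y$ once then has a very clean type-by-type effect: arcs with both endpoints in $X$ are untouched (neither endpoint pushed); arcs with both endpoints in $Y$ are reversed twice and hence are unchanged overall; each cross arc is reversed exactly once, so after the pushes every cross arc points from $X$ into $Y$.

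With this uniform cross-direction in hand, verifying the three conditions is mostly bookkeeping. For condition~(1), any hypothetical cycle after the pushes must either lie entirely inside $\overrightarrow{G}[X]$ or entirely inside $\overrightarrow{G}[Y]$ (both of which remain DAGs because their internal orientations are untouched), or else it must use cross arcs; but every cross arc now points $X \to Y$, so a cycle cannot alternate directions across the cut. The source property of $u$ survives because $u \in X$ had no in-arc originally, arcs inside $X$ are unchanged, and the new cross arcs go $X \to Y$, never adding an in-arc to $u$. Condition~(2) is immediate since $\overrightarrow{G}[X]$ is unchanged, so every old $u$-to-$x$ directed path still exists. For condition~(3), connectedness of the underlying graph together with $X,Y\neq\emptyset$ forces at least one edge across the cut; after the pushes this edge is an arc $x \to y$ with $x \in X$ and $y \in Y$, making $y$ reachable from $u$ via $x$.

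The main point to worry about is the acyclicity check, since in general a single push can create a cycle; the reason the strategy works is precisely that pushing \emph{all} of $Y$ simultaneously relabels the cross arcs uniformly while leaving the two induced sub-orientations $\overrightarrow{G}[X]$ and $\overrightarrow{G}[Y]$ intact, which is exactly the structure that rules out any newly created cycle.
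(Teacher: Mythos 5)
Your proposal is correct and follows essentially the same route as the paper: push all of $Y$, note that only the cross arcs (which originally all point $Y\to X$) flip, and verify acyclicity, the source property of $u$, and reachability exactly as the paper does. The only cosmetic difference is that the paper names the set $Y'$ of $Y$-vertices with an out-neighbor in $X$ and shows all of $Y'$ becomes reachable, whereas you only exhibit one such vertex, which is all the lemma requires.
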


\begin{proof}
    For any $x\in X$ and $y \in Y$, clearly $\overrightarrow{xy}\notin E(\overrightarrow{G})$ as otherwise $y$ is also reachable from $u$ due to transitivity of reachability. Let $Y'$ be the set of vertices in $Y$ that has at least one out-neighbor in $X$. Observe that $Y'\neq \emptyset$ since the underlying graph of $\overrightarrow{G}$ is connected. Now, push each vertex in $Y$. Notice that this operation reverses the orientation of each arc with one endpoint in $X$ and one endpoint in $Y'$, ensuring that each vertex in $Y'$ is now reachable from $u$. Every other arc has the same orientation still. Thus, each vertex in $X$ is reachable from $u$ and each vertex in $Y'$ is reachable from $u$ (and $|Y'|>0$). Next, observe that $u$ is still a source vertex since no arc incident on $u$ changed its orientation. Finally, observe that $\overrightarrow{G}$ is still a DAG after pushing vertices of $Y$. Indeed, there can be no directed cycle that contains at least one vertex from $X$ and at least one vertex from $Y$ as all arcs between vertices of $X$ and $Y$ are oriented from $X$ to $Y$, and if there is a cycle in $X$ or in $Y$, it would imply that $\overrightarrow{G}$ contained a cycle before pushing $Y$, contradicting the fact that $\overrightarrow{G}$ was a DAG. This completes our proof. 
\end{proof}

Using \cref{L:pushDAG}, we establish that if $\overrightarrow{G}$ is a DAG, then $\overrightarrow{G}$ can be pushed to be a DAG with a single source in the following lemma. 
\begin{restatable}{lemma}{TDAG}\label{T:DAG}
    If $\overrightarrow{G}$ is a DAG, then $\overrightarrow{G}$ can be made a DAG with a single source using the push operations. 
\end{restatable}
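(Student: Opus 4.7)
The plan is a straightforward induction driven by \cref{L:pushDAG}. Let me fix any source vertex $u$ of the initial DAG $\overrightarrow{G}$ (such a vertex exists because every finite DAG has a source), and let $r(\overrightarrow{G},u)$ denote the number of vertices reachable from $u$. I will induct on $n - r(\overrightarrow{G},u)$, i.e., on the number of vertices not reachable from $u$, while preserving the invariants that $\overrightarrow{G}$ is a DAG and $u$ is a source.

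In the base case $n - r(\overrightarrow{G},u) = 0$, every vertex is reachable from $u$. In this situation $u$ must be the unique source: any other source $u'$ would have no in-neighbor, but a directed $u$--$u'$ path would force one, a contradiction. So the DAG has a single source, as required.

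For the inductive step, suppose $Y := V(\overrightarrow{G}) \setminus \{\,x : x \text{ reachable from } u\,\}$ is nonempty. Apply \cref{L:pushDAG} with this choice of $u$, $X$, and $Y$: pushing some subset of $Y$ yields an orientation (still called $\overrightarrow{G}$) that is a DAG, in which $u$ is still a source, every vertex of $X$ remains reachable from $u$, and at least one additional vertex of $Y$ becomes reachable from $u$. Hence the number of vertices reachable from $u$ strictly increases, and the invariants are preserved. By the induction hypothesis, further pushes turn this graph into a DAG with a single source, completing the proof.

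I do not expect a real obstacle here — the heavy lifting is already encapsulated in \cref{L:pushDAG}. The only mild subtlety is the observation that in a DAG where some fixed source reaches every vertex, that source must be unique; but this is immediate from the absence of directed cycles. One might also worry that sequential applications of \cref{L:pushDAG} could interfere (for instance, that the set $X$ in a later step might not be a superset of the previous $X$), but the lemma's conclusion explicitly guarantees that all previously reachable vertices remain reachable, so the progress measure $r(\overrightarrow{G},u)$ is monotone and bounded above by $n$, ensuring termination.
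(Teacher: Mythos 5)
Your proof is correct and follows essentially the same route as the paper: fix a source $u$, apply \cref{L:pushDAG} repeatedly, and use the strictly increasing count of vertices reachable from $u$ as the termination measure. Your explicit remark that a source reaching every vertex must be the unique source is a small point the paper leaves implicit, but otherwise the two arguments coincide.
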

\begin{proof}
    Since $\overrightarrow{G}$ is a DAG, it contains at least one source vertex, say $u$. If every vertex in $\overrightarrow{G}$ is reachable from $u$, then we have nothing to prove as $\overrightarrow{G}$ is a DAG with a single source. Otherwise, let $Z$ be the set of vertices that are not reachable from $u$. Then, we apply \cref{L:pushDAG} iteratively to ensure that after each application of \cref{L:pushDAG}, at least one vertex of $Z$ that was not reachable from $u$ before this application of \cref{L:pushDAG} is now reachable from $u$ while ensuring that (i) $\overrightarrow{G}$ remains a DAG, (ii) $u$ remains a source vertex, and (iii) every vertex that was reachable in a previous round is still reachable. Since $Z\subseteq V(\overrightarrow{G})$  is finite and at least one vertex from $Z$ that was not reachable before becomes reachable in each application of \cref{L:pushDAG}, after finitely many applications of \cref{L:pushDAG}, $\overrightarrow{G}$ will be a DAG with a single source $u$.
\end{proof}

Finally, we have the following theorem.
\PushDag*
\begin{proof}
    First, push the vertices of  $\overrightarrow{G}$ such that now $\overrightarrow{G}$ is oriented to be a DAG. Second, apply \cref{T:DAG} to orient $\overrightarrow{G}$ as a DAG with a single source using the push operations. Now, let $u$ be the unique source of $\overrightarrow{G}$. Since DAGs with a unique source are cop-win when $\R$ has to follow the orientations of the arcs~\cite{das2021cops}, the cop has a winning strategy in $\overrightarrow{G}$ by starting the game at $u$, pushing the vertices of $\overrightarrow{G}$ to make it a DAG with a unique source $u$, and then capturing the robber using its winning strategy for DAGs with a single source.  This completes our proof. 
\end{proof}

\section{Graphs with Maximum Degree 4}\label{S:maxdeg}
In~\cite{das2023cops}, the authors established that if the underlying graph of an oriented graph $\overrightarrow{G}$ is subcubic, then $\csp{G} = 1$. In this section, we extend this result to show that if $\overrightarrow{G}$ is an orientation of a graph $G$ with $\Delta(G) \leq 4$, then $\csp{G} = 1$. Since \cref{T:4-regular}, along with \cref{P:degenerate} and \cref{T:3-degenrate}, imply \cref{T:maxdegree}, most of this section is devoted to establish the result for the orientations of $4$-regular graphs (\cref{T:4-regular}).  

A vertex $v \in V(\overrightarrow{G})$ is said to be \textit{visited} if $v$ was occupied by $\R$ in some previous round of the game. The current position of the robber is also considered visited.    Roughly speaking, the cop will apply the following strategy to capture $\R$:  the cop ensures that every vertex that was previously visited by the robber is ``no longer habitable'' for the robber in the sense that if $\R$ moves to a vertex that was previously visited, then $\R$ will be trapped after at most two more moves of $\R$. Thus, the movements of $\R$ in the graph are almost ``monotone'' except for its last few moves. To ensure this, $\C$ will maintain the following invariant: for each visited vertex $u$, $|N^+(u)| \leq 1$. If this invariant is broken, then $\R$ will be trapped after finitely many moves of $\R$. First, we have the following easy lemma. 
\begin{lemma}\label{L:trivial}
    If $\R$ occupies a vertex $v\in V(\overrightarrow{G})$ such that $|N^+(v)| \leq 1$ on a cop's move, then $\R$ will be trapped in the next cop move.
\end{lemma}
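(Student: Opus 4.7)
The plan is to show that the cop has a single move available that immediately removes every out-neighbor of $v$, so that by the definition of trapped, $\R$ is trapped right after this cop move. I would split into two cases based on $|N^+(v)|$.

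First, if $|N^+(v)| = 0$, then $\R$ is already trapped at $v$ by definition, so any cop move (e.g.\ staying in place) leaves $\R$ trapped; there is nothing to do.

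The main case is $|N^+(v)| = 1$. Let $u$ be the unique out-neighbor of $v$, so $\overrightarrow{vu}$ is an arc. Using the strong push ability, the cop performs $push(u)$ as its move. The push operation reverses exactly the arcs incident to $u$, and the only arc incident to both $u$ and $v$ is $\overrightarrow{vu}$. Therefore after the push, the arc $\overrightarrow{vu}$ becomes $\overrightarrow{uv}$, while every other arc incident to $v$ (which is not incident to $u$) retains its orientation. Hence in the resulting orientation, $v$ has no out-neighbor, so $|N^+(v)| = 0$ and $\R$, still located at $v$, is trapped.

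I do not foresee a real obstacle: the only subtlety is to check that pushing $u$ cannot accidentally create a new out-neighbor of $v$, which is ruled out because $push(u)$ only affects arcs with endpoint $u$. Once the lemma is established, it can be combined with \cref{P:trap} to conclude that from this configuration the cop captures $\R$ in finitely many rounds, which is presumably how the lemma will be used in the sequel.
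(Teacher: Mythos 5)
Your proof is correct and follows exactly the paper's argument: handle $|N^+(v)|=0$ trivially, and otherwise push the unique out-neighbor $u$ to reverse the arc $\overrightarrow{vu}$ and leave $v$ with no out-neighbors. The extra observation that $push(u)$ cannot create a new out-neighbor of $v$ is a welcome (if implicit in the paper) detail.
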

\begin{proof}
    If $|N^+(v)| = 0$, then this statement is trivially true. Otherwise, let $N^+(v) = \set{u}$. Thus, $\C$ can trap $\R$ by pushing $u$.
\end{proof}

Next, we have the following remark.
\begin{remark}\label{R:trivial}
     If $\R$ occupies a vertex $v$ such that $|N^+(v)| = 1$ on the robber's turn, then $\R$ is forced to move to the unique out-neighbor of $v$, since otherwise, the cop can trap $\R$ by \cref{L:trivial}. Whenever $\R$ occupies a vertex $v$ such that $|N^+(v)|=2$ and $\C$ pushes one vertex of $N^+(v)$, we will assume that $\R$ will move to the other out-neighbor of $v$ to avoid getting trapped, and we will say that $\R$ is {\em forced} to move to the other out-neighbor  of $v$.
\end{remark}

In the following lemma, we establish that $\C$ can maintain the invariant that for each visited vertex $v$, $|N^+(v)|\leq 1$ and whenever this invariant breaks, $\C$ will trap $\R$ in a finite number of rounds, and hence catch $\R$ after finitely many rounds (due to \cref{P:trap}).

\begin{lemma}\label{L:regular}
    Let $\overrightarrow{G}$ be an orientation of a $4$-regular graph $G$. Let $\R$ occupy a vertex $u\in V(\overrightarrow{G})$ on a cop's move. Then, after the cop's move, either, 
    \begin{enumerate}
        \item the following invariant is satisfied: for each visited vertex $w$, $|N^+(w)| \leq 1$, 
        \item or $\C$ will trap $\R$ in a finite number of rounds.
    \end{enumerate}
\end{lemma}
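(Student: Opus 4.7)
The plan is to case-split on $k := |N^+(u)|$ at the moment $\R$ arrives at $u$. By the assumption that the invariant held after the previous cop move, the vertex $v$ from which $\R$ just arrived was visited with $|N^+(v)| = 1$, and $\overrightarrow{vu}$ is currently an arc; hence $v \in N^-(u)$ and $k \in \{0, 1, 2, 3\}$.

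For $k = 0$, the robber is already trapped at $u$ and \cref{P:trap} yields option~(2). For $k = 1$, the invariant for $u$ holds automatically, and because no push has happened this round the invariant is preserved at every other visited vertex, so option~(1) is achieved. For $k = 3$, write $N^+(u) = \{w_1, w_2, w_3\}$ and let the cop push $u$; all four arcs at $u$ reverse, yielding $N^+(u) = \{v\}$, and since the only out-neighbor of $v$ was $u$, $v$ becomes a sink after the push. On its next move $\R$ either moves to $v$ (immediate trap) or stays at $u$, in which case the subsequent cop move pushes $v$, flipping $\overrightarrow{uv}$ back to $\overrightarrow{vu}$ and leaving $N^+(u) = \emptyset$, so $\R$ is trapped at $u$; thus option~(2) holds. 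The invariant may break at some visited $w_i$, but this is permissible since we are already committed to option~(2).

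The core case is $k = 2$ with $N^+(u) = \{w_1, w_2\}$. The cop's natural move is to push some $w_i$: flipping $\overrightarrow{uw_i}$ drops $|N^+(u)|$ to $1$, securing the invariant at $u$. Pushing $w_i$ can violate the invariant only at a visited vertex $y \in N^+(w_i)$ with $|N^+(y)| = 1$ (whose out-degree rises to $2$); call such $y$ a \emph{conflict} of $w_i$. If some $w_i$ has no conflict, the cop pushes it and option~(1) is maintained. Otherwise, both $w_1$ and $w_2$ admit conflicts $y_1$ and $y_2$; I plan to exploit $4$-regularity and the history-constraints on the robber (each previously visited vertex was exited along its then-unique out-neighbor, which pins down the orientations of its remaining incident edges) to describe the local structure around $u, v, w_1, w_2, y_1, y_2$. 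A push chosen according to this structure will drive every possible reaction of $\R$ into a short chain of forced moves that terminates at a sink, placing us again in option~(2).

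The main obstacle is precisely this conflict subcase of $k = 2$: here a single push cannot simultaneously fix $u$ and preserve the invariant at all previously visited neighbors of $w_1$ and $w_2$, so the argument must accept a broken invariant and instead exhibit a forced trap. The verification is delicate because every branching of $\R$'s reaction must be checked and each branch shown to terminate at a sink within a bounded number of rounds; this is where $4$-regularity (as opposed to $\Delta(G) \le 4$) is indispensable, since fixing the cardinality of every relevant neighborhood at exactly $4$ is what makes the exhaustive local case analysis finite and tractable.
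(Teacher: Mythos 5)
Your skeleton matches the paper's: induct on the robber's moves, case-split on $|N^+(u)|$, dispose of $|N^+(u)|\le 1$ and $|N^+(u)|=3$ exactly as the paper does (the $k=3$ push-$u$-then-push-$v$ argument is the paper's verbatim), and isolate $|N^+(u)|=2$ with ``conflicts'' on both out-neighbors as the crux. But that crux is precisely where the entire content of the lemma lives, and you have only a plan for it, not a proof. The paper spends several pages there: Claim~\ref{C:edge} (the two out-neighbors $x,y$ of $u$ are adjacent), Claim~\ref{C:neighborvisited} ($|N^+(x)|\ne 2$ or $|N^+(y)|\ne 2$), and Claim~\ref{C:nonEdge}, the last of which branches on whether $x_1x_2$ or $y_1y_2$ is an edge, on the orientation of $x_1x_2$, and on how $\{x_1,x_2\}$ and $\{y_1,y_2\}$ intersect. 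Writing ``a push chosen according to this structure will drive every possible reaction of $\R$ into a short chain of forced moves that terminates at a sink'' asserts the conclusion of this analysis without performing it.

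Moreover, the plan as stated is not merely incomplete but would fail in at least one sub-case: in Case~2(2)(b) of Claim~\ref{C:nonEdge} ($x_1=y_1$, $x_2\ne y_2$, $yx_2\notin E(G)$), no local sequence of pushes traps the robber, because the robber can simply stall at $u$. The paper's resolution is to have the cop physically walk (pushing only its own position when needed) to a designated set $U$ of roughly sixteen vertices around $u$ before executing the trapping sequence, with the terminal move being an actual capture by the cop's body rather than a sink. Your sketch assumes a purely push-based, position-independent resolution, which this sub-case rules out. Two smaller omissions: your setup presumes $\R$ arrived from a visited $v$, so it misses the base case of the robber's initial placement where $|N^+(u)|=4$ is possible (the paper traps by pushing $u$); and your notion of conflict only tracks invariant violations at vertices of $N^+(w_i)$, overlooking that $w_i$ itself could be a visited vertex (which forces $|N^+(w_i)|\le 1$ and is absorbed by the paper's Claim~\ref{C:neighborvisited}, but needs to be said).
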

\begin{proof}
    We will prove this result using induction on the number of moves played. For the base case, we show that if $\R$ begins at the vertex $u$, then the statement of the lemma holds. Let $\R$ begin the game at $u$. At this point, if $|N^+(u)| = 4$, then $\C$ can trap $\R$ by pushing $u$. If $|N^+(u)| = 3$, then $\C$ can achieve (1) by pushing $u$. If $|N^+(u)| = 2$, then let $N^+(u)= \set{x,y}$. Here, $\C$ will push the vertex $x$, forcing $\R$ to move to $y$. Notice that  (1) is achieved in this case. If $|N^+(u)|\leq 1$, then $\C$ will trap $\R$ using Lemma~\ref{L:trivial}. 

    Now, suppose the condition (1) of our lemma is satisfied when $\R$ is at a vertex $v$ (i.e., each visited vertex, including $v$ has out-degree at most one after the cop's move). Now, let $\R$ move from $v$ to some vertex $u\in N^+(v)$. Thus, $N^+(v) = \set{u}$. Now, we distinguish the action of $\C$ based on the cardinality of $|N^+(u)|$. If $|N^+(u)| \leq 1$, then $\C$ can trap $\R$ using Lemma~\ref{L:trivial}. If $|N^+(u)| = 3$, then $\C$ will push $u$. Note that, now, $N^+(u) = \set{v}$ and $N^+(v) = \emptyset$ (since pushing $u$ can change the orientation of only one arc incident to $v$, i.e.,  $\overrightarrow{vu}$ is changed to $\overrightarrow{uv}$). Thus, if $\R$ moves to $v$, then $\R$ is automatically trapped at $v$. Otherwise, $\C$ will push $v$ in the next round to trap $\R$ at $u$. 
    
    Hence, from now on, we can assume that $|N^+(u)| = 2$ (when $\R$ moves to $u$) and let $N^+(u) = \set{x,y}$. Now, we will distinguish two cases  depending on whether $xy\in E(G)$ or not. First, we prove the following claim.

    \begin{claim}\label{C:edge}
        If $xy\in E(G)$, then $\C$ can trap $\R$ after a finite number of rounds.
    \end{claim}
    \begin{proofofclaim}
        Without loss of generality, let us assume that $\overrightarrow{xy} \in E(\overrightarrow{G})$ when $\R$ moved from $v$ to $u$. Now, we have the following three exhaustive cases. 

        \begin{figure}
\centering
\begin{subfigure}{.3\textwidth}
  \centering
  \includegraphics[width=.8\linewidth] {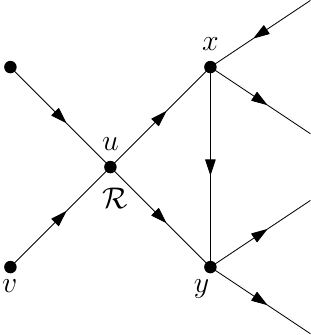}
  \caption{$\R$ moves to $u$.}
  \label{fig:OC1}
\end{subfigure}\hfill
\begin{subfigure}{.3\textwidth}
  \centering
  \includegraphics[width=.8\linewidth] {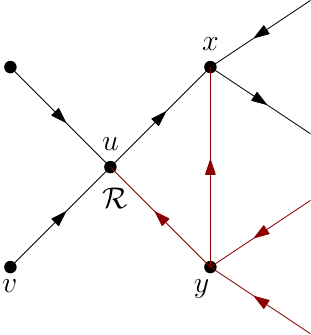}
  \caption{$\C$ pushes $y$.}
  \label{fig:OC2}
\end{subfigure}\hfill
\begin{subfigure}{.3\textwidth}
  \centering
  \includegraphics[width=.8\linewidth] {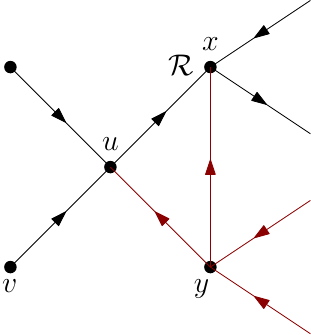}
  \caption{$\R$ moves to $x$.}
  \label{fig:OC3}
\end{subfigure}\hfill

\caption{An illustration for Case~1 of \cref{C:edge}. Once $\R$ moves to $x$,  $\C$ traps $\R$ using \cref{L:trivial}} 
\label{fig:Ecase1}
\end{figure}
        
        \medskip
        \noindent\textbf{Case~1.} $x$ has at most one out-neighbor other than $y$ (i.e., $|N^+(x)| \leq 2$): See Figure~\ref{fig:Ecase1} for an illustration. In this case, $\C$ begins with pushing $y$ when $\C$ moves to $u$. This forces $\R$ to move to $x$. Now, $|N^+(x)| \leq 1$ (since the orientation of $\overrightarrow{xy}$ was reversed to $\overrightarrow{yx}$ when $\C$ pushed $y$). Hence, $\C$ can trap $\R$ using \cref{L:trivial}.

        \medskip
        \noindent\textbf{Case~2.} $y$ has at most one out-neighbor (i.e., $|N^+(y)| \leq 1$): Due to Case~1, we can safely assume that $|N^+(x)| = 3$, else $\C$ can trap $\R$ using the strategy from Case~1. See \cref{fig:Ecase2} for an illustration. $\C$ begins with pushing the vertex $x$, and now $N^+(x) =\set{u}$ and $|N^-(x)|=3$. Thus, $\R$ is forced to move to $y$. At this point, either $|N^+(y)| = 1$ (if $|N^+(y)|=0$ when $\C$ pushed $x$) or  $|N^+(y)| = 2$ and $x\in N^+(y)$ (if $|N^+(y)|=1$ when $\C$ pushed $x$). If $|N^+(y)| = 1$, then $\C$ will trap $\R$ using \cref{L:trivial}.
        
        If $|N^+(y)| =2$, i.e., $N^+(y) = \{w,x\}$, when $\R$ moved to $y$, then $\C$ will use the following strategy to trap $\R$. Now, $\C$ pushes $w$, forcing $\R$ to move to $x$. At this point, consider the neighborhood of $x$. Initially, $|N^+(x)| = 3$ and $N^-(x) =\set{u}$. Then, we pushed $x$ and we had $|N^-(x)| = 3$ and $N^+(x) = \{u\}$. After this, we pushed the vertex $w$. Now, if $xw$ is not an edge in the underlying graph (i.e., $\overrightarrow{xw}$ was not an arc before we pushed $x$), then we have that $N^+(x) = \set{u}$ when $\R$ moved to $x$, and hence, $\C$ can trap $\R$  using \cref{L:trivial}. Otherwise, if $xw$ is an edge in the underlying graph (i.e., $\overrightarrow{xw}$ and $\overrightarrow{yw}$ were arcs before we pushed $x$), then we have that $N^+(x) = \set{u,w}$ when $\R$ moves to $x$. At this point, we again push $w$, forcing $\R$ to move to $u$. Now, consider the neighborhood of $u$ and sequence of push operations performed. Initially (in the beginning of our case), $N^+(u) = \set{x,y}$. Then, $\C$ pushed $x$ once and $w$ twice. Hence, the only effect in the graph is that of pushing $x$, and $N^+(u) = \set{y}$ when $\R$ moves to $u$. Hence, $\R$ will be trapped by an application of \cref{L:trivial}.  

        \begin{figure}
\centering
\begin{subfigure}{.3\textwidth}
  \centering
  \includegraphics[width=.8\linewidth] {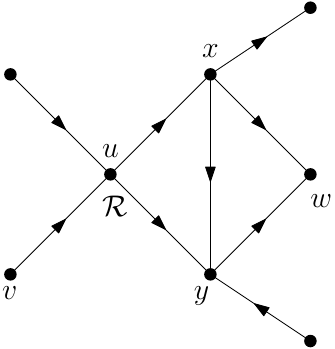}
  \caption{$\R$ moves to $u$.}
  \label{fig:OC11}
\end{subfigure}\hfill
\begin{subfigure}{.3\textwidth}
  \centering
  \includegraphics[width=.8\linewidth] {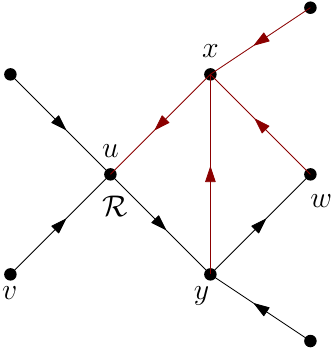}
  \caption{$\C$ pushes $x$.}
  \label{fig:OC22}
\end{subfigure}\hfill
\begin{subfigure}{.3\textwidth}
  \centering
  \includegraphics[width=.8\linewidth] {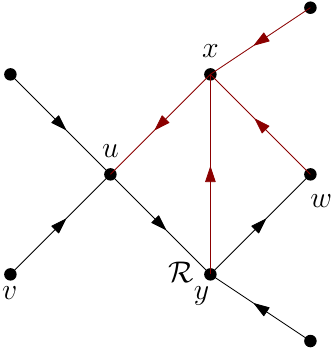}
  \caption{$\R$ moves to $y$.}
  \label{fig:OC33}
\end{subfigure}\hfill
\begin{subfigure}{.3\textwidth}
  \centering
  \includegraphics[width=.8\linewidth] {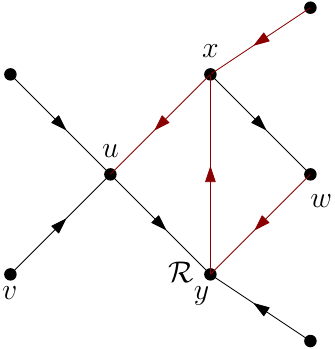}
  \caption{$\C$ pushes $w$.}
  \label{fig:OC44}
\end{subfigure}\hfill
\begin{subfigure}{.3\textwidth}
  \centering
  \includegraphics[width=.8\linewidth] {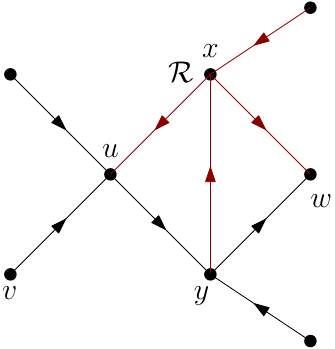}
  \caption{$\R$ moves to $x$ and $\C$ pushes $w$.}
  \label{fig:OC35}
\end{subfigure}\hfill
\begin{subfigure}{.3\textwidth}
  \centering
  \includegraphics[width=.8\linewidth] {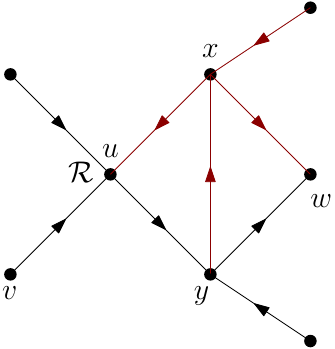}
  \caption{$\R$ moves to $x$.}
  \label{fig:OC36}
\end{subfigure}\hfill
\caption{An illustration for Case~2 of \cref{C:edge}. We illustrate the case when $w\in N^+(x) \cap N^+(y)$. \cref{fig:OC35} illustrates two steps: $\R$ moving from $y$ to $x$ and then $\C$ pushing $w$. Once $\R$ moves to $u$ from $x$,  $\C$ traps $\R$ using \cref{L:trivial}} 
\label{fig:Ecase2}
\end{figure}

        \medskip
        \noindent\textbf{Case~3.} $|N^+(x)| = 3$ and $|N^+(y)| = 2$: In this case, $\C$ begins with pushing $x$, forcing   $\R$  to move to $y$. Notice that when $\R$ moves to $y$, $|N^+(y)| = 3$ (since the orientation of $\overrightarrow{xy}$ was reversed when we pushed $x$), $N^-(y) = \{u\}$, and $N^+(u) = \{y\}$. Now, $\C$ pushes $y$ to make $N^+(y) = \set{u}$ and $N^+(u) = \emptyset$, forcing   $\R$ to move to $u$, where $\R$ is trapped  since $N^+(u) = \emptyset$. 
    
    \medskip  
    Finally, to complete the proof of our claim, we observe that our 3 cases are indeed exhaustive when $\overrightarrow{xy}$ is an arc. Since our underlying graph $G$ is 4-regular and $\overrightarrow{ux}, \overrightarrow{uy}, \overrightarrow{xy}$ are arcs, clearly we have that $|N^+(x)| \leq 3$ and $|N^+(y)| \leq 2$. We consider the case $|N^+(x)| = 3$ and $|N^+(y)| = 2$ in Case~3, $|N^+(x)|\leq 2$ in Case~2, and $|N^+(x)|\leq 1$ in Case~1. This completes the proof of our claim.
    \end{proofofclaim}

    Hence, due to \cref{C:edge}, we can assume that $xy$ is not an edge in the underlying graph $G$. Next, we prove an easy but useful claim. 

    \begin{claim}\label{C:neighborvisited}
        Given $xy\notin E(G)$, if $|N^+(x)| \neq 2$ or $|N^+(y)| \neq 2$, then $\C$ can trap $\R$ in at most $2$ robber moves.
    \end{claim}
    \begin{proofofclaim}
        Without loss of generality, let us assume that $|N^+(x)|\neq 2$. $\C$ begins with pushing $y$, forcing $\R$ to move to $x$. Notice that this does not change $|N^+(x)|$ since $xy\notin E(G)$. We distinguish the following two cases:

        \medskip
        \noindent\textbf{Case~1.} $|N^+(x)|\leq 1$: $\C$ can trap $\R$ using \cref{L:trivial}.

        \medskip
        \noindent\textbf{Case~2.} $|N^+(x)| =3$. $\C$ pushes $x$, and now $N^+(x) = \set{u}$. This forces $\R$ to move to $u$. Since we have pushed both out-neighbors of $u$, $N^+(u) = \emptyset$, and thus $\R$ is trapped. 
    \end{proofofclaim}

    In the following claim, we establish that if $xy \notin E(G)$, then $\C$ can establish the conditions of our lemma. Observe that the following claim, along with \cref{C:edge} will complete our proof.
    \begin{claim}\label{C:nonEdge}
        If $xy\notin E(G)$, then $\C$ can establish the conditions of our lemma.
    \end{claim}
    \begin{proofofclaim}
        Due to \cref{C:neighborvisited}, unless $|N^+(x)| =|N^+(y)| = 2$, $\C$ can trap $\R$ in at most two moves of the robber. Hence, we assume that $|N^+(x)| =|N^+(y)| = 2$ (and hence neither $x$ nor $y$ is a visited vertex) for the rest of this proof. Further, if no vertex of $N^+(x)$ (resp. $N^+(y)$) is a visited vertex, then $\C$ pushes $x$ (resp. $y$), forcing $\R$ to move to $y$ (resp. $x$), and this satisfies condition~(1) of our lemma. Therefore, for the rest of this proof, we assume that $|N^+(x)| =|N^+(y)| = 2$ and that each of $N^+(x)$ and $N^+(y)$ contains a visited vertex. Further, let $N^+(x) = \set{x_1,x_2}$ and $N^+(y)= \set{y_1,y_2}$, and without loss of generality, let us assume that $x_1$ and $y_1$ are visited vertices.  We remark that possibly $\set{x_1,x_2} \cap \set{y_1,y_2} \neq \emptyset$, and we assume that whenever $x_1\in \{y_1,y_2\}$, we set $x_1=y_1$.

        Now, we distinguish the following two cases depending on whether at least one of $x_1x_2$ or $y_1y_2$ is an edge in the underlying graph:

        \medskip
        \noindent\textbf{Case~1.} At least one of $x_1x_2$ or $y_1y_2$ is an edge in the underlying graph $G$ of $\overrightarrow{G}$: Without loss of generality, let us assume that $x_1x_2 \in E(G)$. Now, again we have the following two cases depending on the orientation of $x_1x_2$ when $\R$ moves to $u$ from $v$. 
        \begin{enumerate}
            \item Edge $x_1x_2$ is oriented as $\overrightarrow{x_1x_2}$ when $\R$ moves to $u$: 
            Observe that, in this case, $N^+(x_1) = \set{x_2}$ when $\R$ moved to $u$ (since $x_1$ is a visited vertex and hence, $|N^+(x_1)| \leq 1$). $\C$ begins with pushing the vertex $y$, forcing $\R$ to move to $x$. Observe that, at this point, $|N^+(x_1)| \leq 2$ as possibly $\overrightarrow{yx_1}$ was an arc when $\R$ moved from $v$ to $u$ and then we pushed $y$.  Next, $\C$ pushes $x_2$, which ensures $|N^+(x_1)| \leq 1$ again. Thus,  $\R$ is forced to move to $x_1$, where $\R$ will be trapped via an application of \cref{L:trivial} as $|N^+(x_1)| \leq 1$. 

            \item Edge $x_1x_2$ is oriented as $\overrightarrow{x_2x_1}$ when $\R$ moves to $u$: We again distinguish the following cases depending on how $\set{x_1,x_2}$ and $\set{y_1,y_2}$ intersect. 
            \begin{enumerate}
                \item $x_1=y_1$ and $x_2=y_2$:   Since $x_1=y_1$ is a visited vertex $|N^+(x_1)| \leq 1$ when $\R$ moves to $u$ to from $v$ as per induction hypothesis. Now, we will again consider two cases depending on whether $|N^+(x_1)|= 1$ or $|N^+(x_1)| = 0$.

\begin{figure}
\centering
\begin{subfigure}{.47\textwidth}
  \centering
  \includegraphics[width=.9\linewidth] {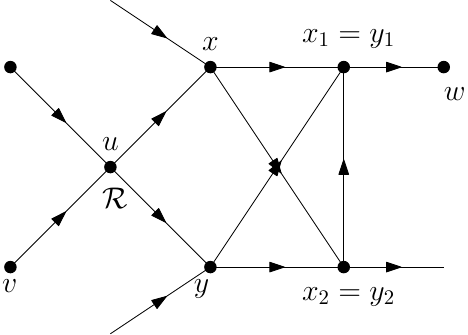}
  \caption{$\R$ moves to $u$.}
  \label{fig:S1}
\end{subfigure}\hfill
\begin{subfigure}{.47\textwidth}
  \centering
  \includegraphics[width=.8\linewidth] {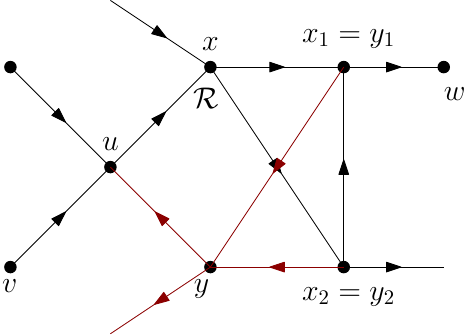}
  \caption{$\C$ pushes $y$ and $\R$ moves to $x$.}
  \label{fig:S2}
\end{subfigure}\hfill
\begin{subfigure}{.47\textwidth}
  \centering
  \includegraphics[width=.8\linewidth] {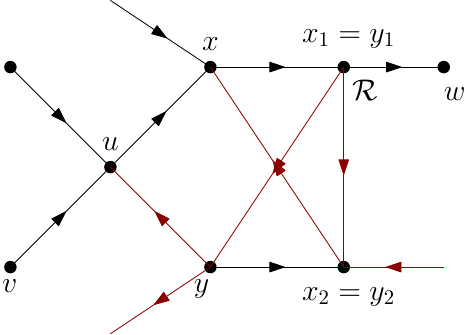}
  \caption{$\C$ pushes $x_2$ and $\R$ moves to $x_1$.}
  \label{fig:S3}
\end{subfigure}\hfill
\begin{subfigure}{.47\textwidth}
  \centering
  \includegraphics[width=.8\linewidth] {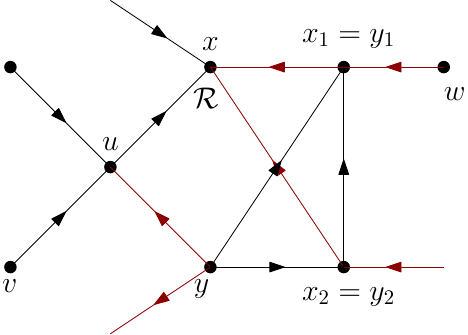}
  \caption{$\C$ pushes $x_1$ and $\R$ moves to $x$.}
  \label{fig:S4}
\end{subfigure}\hfill
\caption{An illustrative proof for the Case~1(2)(a) of \cref{C:nonEdge} when $|N^+(x_1)|=1$. When $\R$ moves to $x$ in \cref{fig:S4}, it gets trapped.} 
\label{fig:sameNeighbor}
\end{figure}

                First, suppose $|N^+(x_1)| = 1$ and let $N^+(x_1) = \set{w}$. See Figure~\ref{fig:sameNeighbor} for an illustration of the proof. $\C$ begins with pushing $y$, forcing $\R$ to move to $x$. Next, $\C$ pushes $y_2$, forcing to move to $x_1$. Now, at this point observe that $N^+(x_1) = \set{w,x_2,y}$ and $N^-(x_1) =\set{x} $. Now, $\C$ pushes $x_1$ to make $N^+(x_1) = \set{x}$, forcing $\R$ to  move to $x$. At this point, observe that $N^+(x) = \emptyset$ as both out-neighbors of $x$ (i.e., $x_1$ and $x_2$) are pushed exactly once and no in-neighbor of $x$ is pushed. Thus, $\R$ gets trapped at $x$. 

                Second, suppose $|N^+(x_1)| = 0$. Here, $\C$ begins with pushing $y$, which forces $\R$ to move to $x$. Next, $\C$ pushes $y$ again, returning the graph to its initial configuration. Now, $\R$ can either (i) stay on $x$, (ii) move to $x_1$, or (iii) move to  $x_2$. (i) If $\R$ stays on $x$, then $\C$ pushes $x_2$, forcing $\R$ to move to $x_1$, and since $|N^+(x_1)| \leq 1$ at this point of time, it will be trapped at $x_1$ due to \cref{L:trivial}. (ii) If $\R$ moves to $x_1$, then observe that it is trapped since $|N^+(x_1)|=0$. (iii) If $\R$ moves to $x_2$, then either $|N^+(x_2)| =1$, i.e., $N^+(x_2) = \set{x_1}$, or $|N^+(x_2)| = 2$, i.e., $N^+(x_2) = \set{x_1,w'}$ for some $w'\in V(\overrightarrow{G})$. If $|N^+(x_2)|=1$, then $\R$ gets trapped via an application of \cref{L:trivial}. If $N^+(x_2) = \set{x_1,w'}$, then cop pushes $w'$, forcing $\R$ to move to $x_1$. Observe that $|N^+(x_1)|\leq 1$ at this point since $|N^+(x_1)| =0$ before $\C$ pushed $w'$. Hence, $\R$ will be trapped via an application of \cref{L:trivial}.

                \item $x_1=y_1$ but $x_2\neq y_2$: Since $x_1$ is a visited vertex, similarly to case above, we have that either $|N^+(x_1)| = 0$ or $|N^+(x_1)| = 1$ and we distinguish cop's strategy based on this.

                First, let $N^+(x_1) = \set {w}$ when $\R$ moved to $u$. $\C$ begins with pushing the vertex $y$, which forces $\R$ to move to $x$. Now, observe that $N^+(x_1) = \set{w,y}$. When $\R$ reaches $x$, $\C$ pushes $x_2$, forcing $\C$ to move to $x_1$. At this point, observe that $N^+(x_1) = \set{x_2,y,w}$ (since $x_1x_2\in E(G)$) and $N^+(x) =\set{x_1}$. When $\R$ reaches $x_1$, $\C$ pushes $x_1$, which makes  $N^+(x_1) = \set{x}$, and hence $\R$ is forced to moved to $x$. When $\R$ moves to $x$, $|N^+(x)|=0$, and hence $\R$ is trapped at $x$.

                Second, let $N^+(x_1) = \emptyset$. Again $\C$ begins with pushing $y$, forcing $\R$ to move to $x$. $\C$ pushes $y$ again returning the graph to its original configuration. Now $\R$ has one of the following three options: (i) $\R$ moves to $x_1$, in which case it gets trapped by definition since $|N^+(x_1)| = 0$.
                (ii) $\R$ stays on $x$, in which case $\C$ pushes $x_2$, forcing $\R$ to move to $x_1$ while $|N^+(x_1)| = 1$, and hence $\R$ will be trapped due to \cref{L:trivial}. (iii) $\R$ moves to $x_2$. Now, observe that $|N^+(x_2)| \in \set{1,2,3}$. If $|N^+(x_2)| =1$, then $\R$ is trapped using \cref{L:trivial}. If $|N^+(x_2)|=2$, then $\C$ pushes the out-neighbor of $x_2$ other than $x_1$, forcing $\R$ to move to $x_1$, and at this point, observe that $|N^+(x_1)| \leq 1$, and hence $\R$ will be trapped using \cref{L:trivial}. Finally, if $|N^+(x_2)| = 3$, then $\C$ pushes $x_2$, forcing $\R$ to move back to $x$, and since $|N^+(x)| =1$ (since effectively only $x_2$ has been pushed and $x_2\in N^+(x)$), $\R$ will be trapped using \cref{L:trivial}. 

                \item $x_1\neq y_1$: First, we show that if $|N^+(x_1)| = 0$, then we can trap $\R$. $\C$ begins with pushing $y$, forcing $\R$ to move to $x$. Next, $\C$ pushes $x_2$, forcing $\R$ to move to $x_1$. At this point, observe that at most one out-neighbor $x_2$ of $x_1$ is pushed. Thus, $|N^+(x_1)|=1$, and hence $\R$ will be trapped due to \cref{L:trivial}. Hence, for the rest of this case, let us assume that $N^+(x_1) = \set{w_1}$. Now, we have three cases depending on whether (i) $|N^+(x_2)| =1$, (ii) $|N^+(x_2)| = 2$, or (iii) $|N^+(x_2)|=3$.

                \smallskip
                (i) First, suppose $|N^+(x_2)| = 1$ and  $N^+(x_2) = \set{x_1}$. Now, $\C$ pushes $y$ forcing $\R$ to move to $x$. Next, $\C$ pushes $y$ again, returning the graph to its original configuration. Since both $|N^+(x_1)| = |N^+(x_2)| =1$, if $\C$ moves to any of $x_1,x_2$, it will be trapped due to \cref{L:trivial}. If it stays on $x$, then $\C$ pushes $x_1$, forcing $\R$ to move to $x_2$, which is a trap vertex since the only out-neighbor of $x_2$ (i.e., $x_1$) is pushed exactly once. Hence $\R$ will be trapped in this case. 

                \smallskip
                (ii) Second, suppose $|N^+(x_2)| = 2$, $N^+(x_2) = \set{x_1,w_2}$ and $N^-(x_2) = \set{x,w_3}$ (possibly $w_2$ or $w_3$ can be an in-neighbor or out-neighbor of $x_1$ or in-neighbor of $x$). Again $\C$ begins with pushing $y$, forcing $\R$ to move to $x$. Now, consider $N^+(x_2)$ at this point of time. If $|N^+(x_2)| = 3$ (i.e., $y=w_3$), then $\C$ pushes $x_2$, forcing $\R$ to move to $x_1$. At this point $N^+(x_1)=\set{w_1,x_2}$. We push $w_1$, forcing $\R$ to move to $x_2$. If $w_1\neq w_2$, then observe that $|N^+(x_2)| =1$ at this point and hence, $\R$ will be trapped using \cref{L:trivial}. Else, $\C$ pushes $w_1$, forcing $\R$ to move to $x$, which has at most one out-neighbor at this point, and hence $\R$ will be trapped due to \cref{L:trivial}. If $|N^+(x_2)| \leq 2$ (after $\C$ has pushed $y$), then $\C$ pushes $x_1$, forcing $\R$ to move to $x_2$, and since $|N^+(x_2)| \leq 1$ after $\C$ pushed $x_1$, $\R$ will be trapped at $x_2$ due to \cref{L:trivial}.

                \smallskip
                (iii) Third, suppose $N^+(x_2) = 1$. $\C$ begins with pushing $y$, forcing $\R$ to move to $x$. At this point $|N^+(x_2)| \leq 2$. Next, $\C$ pushes $x_1$, forcing $\R$ to move to $x_2$ and ensuring that $|N^+(x_2)|\leq 1$ again. Hence, $\R$ will be trapped at $x_2$ due to \cref{L:trivial}. 

              \end{enumerate}

        \end{enumerate}
    This completes the proof of Case~1.

    \medskip
    \noindent \textbf{Case~2.} $x_1x_2\notin E(G)$ and $y_1y_2 \notin E(G)$: Here, we will distinguish the following two cases:
    \begin{enumerate}
        \item $x_1 \notin \set{y_1,y_2}$ (i.e., $x_1 \notin N^+(y)$): Here, $\C$ begins with pushing $y$, forcing $\R$ to move to $x$. Next, $\R$ pushes $x_2$, forcing $\R$ to move to $x_1$. Since $x_1$ was a visited vertex, $|N^+(x_1)| \leq 1$ before $\C$ started pushing vertices and since none of the pushed vertices, i.e, $y,x_2$ are neighbors of $x_1$, we have that $|N^+(x_1)|\leq 1$ when $\R$ moved to $x_1$. Hence, $\R$ will be trapped by $\C$ using \cref{L:trivial}.

        \item $x_1\in \set{y_1,y_2}$: Here, since both $x_1$ and $y_1$ are visited vertices and $x_1, y_1\in N^+(y)$, we can assume without loss of generality that $x_1=y_1$. First, we establish that if $|N^+(x_2)| \neq 2$, then $\C$ can trap $\R$. Since $x\in N^-(x_2)$, if $|N^+(x_2)| \neq 2$, then either $|N^+(x_2)| \leq 1$ or $|N^+(x_2)| = 3$. In this case (i.e., when $|N^+(x_2)| \neq 2$), $\C$ begins with pushing $y$, forcing $\R$ to move to $x$. Next, $\C$ pushes the vertex $y$ again, returning the graph to its initial configuration. Now, $\R$ can either (i) stay at $x$, (ii) move to $x_1$, or (iii) move to $x_2$. (i) If $\R$ stays on $x$, then $\C$ pushes $x_2$, forcing $\R$ to move to $x_1$, and since $|N^+(x_1)| \leq 1$ at this point of time, $\R$ will be trapped at $x_1$ due to \cref{L:trivial}. (ii) If $\R$ moves to $x_1$, then $\R$ will be trapped via an application of \cref{L:trivial} since $|N^+(x_1)|\leq 1$. (iii) If $\R$ moves to $x_2$, then either $|N^+(x_2)| \leq 1$, or $|N^+(x_2)| = 3$. If $|N^+(x_2)|\leq 1$, then $\R$ gets trapped via an application of \cref{L:trivial}. If $|N^+(x_2)| = 3$, then $\C$ pushes $x_2$ to make $N^+(x_2) = \{x\}$, forcing $\R$ to move to $x$. Observe that $|N^+(x)|\leq 1$  at this point since, before $\C$ pushed $x_2$, we had $|N^+(x)| =2$ and $x_2\in N^+(x)$. Hence, $\R$ will be trapped via an application of \cref{L:trivial} at $x$. Notice, that, via symmetry, we can also conclude that if $|N^+(y_2)| \neq 2$ (given $x_1=y_1$), then $\C$ can trap $\R$. 

        Therefore, for the rest of the proof of this case, we will assume that $|N^+(x_2)| = 2$ and $|N^+(y_2)| = 2$. Let $N^+(x_2) = \set{w_3,w_2}$. Now, we again distinguish the following two cases: 
        \begin{enumerate}
            \item $y_2= x_2$: In this case, $N^+(x_2)= \set{w_3,w_2}$ and $N^-(x_2) = \set{x,y}$. $\C$ begins with pushing the vertex $y$, forcing $\R$ to move to $x$. At this point observe that $N^+(x_2) = \set{w_3,w_2,y}$. Now, $\C$ pushes $x_3$, forcing $\R$ to move to $x_2$. Now, $\C$ pushes $x_2$ ensuring that $N^+(x_2) = \set{x}$, forcing $\R$ to move to $x$. Finally, observe that since we have pushed both out-neighbors of $x$ ($x_3$ and $x_2$) exactly once and have not pushed any in-neighbor of $y$, observe that $\R$ gets trapped at $x$ as $N^+(x) = \emptyset$ at this point. 

            \item $x_2 \neq y_2$: Since $N^+(y) = \set{y_1,y_2}$ and $x_2$ is distinct from both $y_1,y_2$, we have that $\overrightarrow{yx_2} \notin E(\overrightarrow{G})$ when $\R$ moves to $u$ from $v$. Hence the only two possibilities are either $yx_2 \notin E(G)$ or $\overrightarrow{x_2y} \in E(G)$ (when $\R$ moved from $v$ to $u$). We will consider both of these possibilities separately.

            First, let $\overrightarrow{x_2y}\in E(\overrightarrow{G})$ when $\R$ moved to $u$ from $v$. $\C$ begins with pushing $y$, forcing $\R$ to move $x$. When $\R$ moves to $x$, observe that since $y$ was an out-neighbor of $x_2$ and $|N^+(x_2)| = 2$ before we pushed $y$, we have that $|N^+(x_2)| = 1$ when $\R$ reaches $x$. Next, $\C$ pushes $x_1$, forcing $\R$ to move to $x_2$. Since $x_1x_2\notin E(G)$, we have that $|N^+(x_2)| = 1$ when $\R$ moves to $x_2$, and hence $\R$ will be trapped at $x_2$ due to \cref{L:trivial}.

             \begin{figure}
                \centering
                \includegraphics[width=0.5\linewidth] {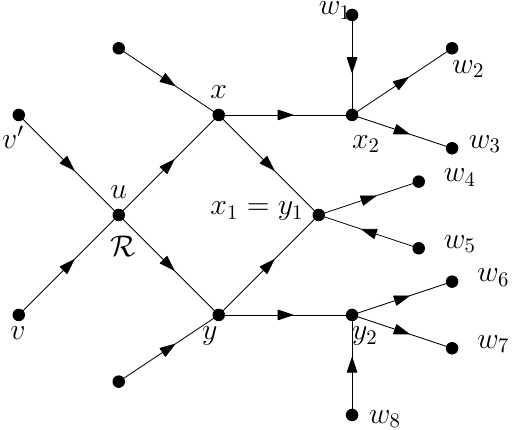}
                \caption{Illustration for the Case~2(2)(b) of \cref{C:nonEdge} when $yx_2\notin E(G)$.}
                \label{fig:final}
            \end{figure}

            Finally, we consider the  case when $yx_2 \notin E(G)$. See \cref{fig:final} for an illustration. Recall that $N^+(u) = \set{x,y}$ and let $N^-(u) = \set{v,v'}$. Similarly, let $N^+(x) = \set{x_1,x_2}$, $N^-(x) = \set{u,x'}$, $N^+(y)=\set{y_1,y_2}, N^-(y)=\set{u,y'}, N^+(x_2)=\set{w_2,w_3}, N^-(x_2)=\set{x,w_1}, N^+(x_1)=\set{w_4}, N^-(x_1)= \set{x,y,w_5}, N^+(y_2)= \set{w_6,w_7}, N^-(y_2) = \set{w_8,y}$. Here we assumed that $|N^+(x_1)| = 1$. If $N^+(x_1) = \emptyset$, then $\C$ can push $y$, forcing $\R$ to move to $x$, and then push $x_2$, forcing $\R$ to move to $x_1$, at which point \cref{L:trivial} applies. 
            
            Let $U = \set{v',v,u,x',x,y',y,x_2,x_1,y_2,w_1,\ldots,w_8}$. We note that it is possible that all vertices in $\set{v',v,u,x',x,y',y,x_2,x_1,y_2,w_1,\ldots,w_8}$ are not distinct. First, we argue that if no vertex of $U$ is pushed and $\R$ moves to either of $x$ or $y$, it will be trapped in at most two  cop moves. To see this, let $\R$ move to $x$ (resp. $y$). Then, $\C$ pushes $x_2$ (resp. $y_2$), which forces $\R$ to move to $x_1$ and since $N^+(x_1) =1$, $\R$ will be trapped in the next cop move by pushing $w_4$. 
            
            Now, as long as $\R$ stays at the vertex $u$ (i.e., it passes its moves by staying at the same vertex), $\C$ does the following: Let $\C$ occupy a vertex $z$. $\C$ finds a shortest path in $G$ from $z$ to a vertex in $U$ and either moves towards it if the orientation allows or pushes its current vertex so that the orientation in the next cop move allows the cop to move towards $U$. Using this strategy, $\C$ will reach a vertex of $U$ without ever pushing any vertex of  $U$. During all these moves, if $\R$ moves from $u$, it will be trapped. Hence, $\R$ is still at $u$. Now, consider the possible vertices where $\C$ arrives in $U$. If it arrives at either of $v,v'$, then observe that $\R$ will have to move in the next round, else it will be captured and since we have not pushed any vertex in $U$, $\R$ will be trapped in at most two rounds. If $\C$ reaches $x'$ (resp. $y'$), then $\C$ will push $y$ (resp. $x$), forcing $\R$ to move to $x$ (resp. $y$) in the next round, where it will be captured in the next round. Next, if $\C$ reaches a vertex in $\set{w_4,w_5}$, then $\C$ pushes $y$ to force $\R$ to move to $x$, and then push $x_2$ to force $\R$ to $x_1$. If $\C$ is at $w_5$, then $\R$ will be captured, else, if $\C$ is at $w_4$, then $\C$ pushes $y$, and if $\R$ moves to $w_4$ it gets captured, else $\C$ pushes $w_4$ to trap $\R$. Finally, if $\C$ reached a vertex in $\set{w_1,w_2,w_3}$ (resp. in $\{w_6,w_7,w_8\}$), then $\C$ pushes $y$ (resp. $x$). This forces $\R$ to move to $x$ (resp. $y$). Next, $\C$ pushes $x_1$, forcing $\R$ to move to $x_2$ (resp. $y_2$). Now, if $\C$ was at $w_1$ (resp. $w_8$), $\R$ would be captured in this round. Else, without loss of generality, let us assume that $\C$ is at $w_2$ (resp. $w_6$). In this case, $\C$ pushes $w_3$ (resp. $w_7$). Now, if $\R$ does not move to $w_2$ (resp. $w_6$) in the next round, it will be trapped by pushing $w_2$ (resp. $w_6$), and if it moves, observe that it will be captured by $\C$. This completes our proof for this case.

        \end{enumerate}

    \end{enumerate}

    The proof of the claim is completed by the above two exhaustive cases.
   \end{proofofclaim}
        
   The proof of our lemma follows from \cref{C:edge} and \cref{C:nonEdge}.    
\end{proof} 

Now, we present the main result of this section.
\Tregular*
\begin{proof}
    The cop $\C$ follows the strategy from \cref{L:regular} to ensure  the invariant that the out-degree of every visited vertex is at most one. If this invariant breaks, then observe that $\C$ traps $\R$ using \cref{L:regular}. Since $\overrightarrow{G}$ is finite, after a finite number of rounds, $\R$ will again visit a visited vertex where it will be trapped using \cref{L:trivial}. Finally, $\C$ can capture the trapped robber using \cref{P:trap}. This completes our proof.
    \end{proof}

Observe that \cref{T:4-regular} along with \cref{T:3-degenrate} and \cref{P:degenerate},  implies the following theorem.

\Tmaxdegree*
\section{Conclusion}\label{S:conclude}
We established that if $\overrightarrow{G}$ is an orientation of a graph with maximum degree $4$, then $\csp{G} = 1$. We also observed that if $\overrightarrow{G}$ can be transformed into a DAG using the push operation, then $\csp{G} = 1$. Huang, MacGillivray, and Yeo~\cite{huang2002pushing} showed that a chordal graph can be made a DAG via push operations if and only if it does not contain the subdigraphs depicted in \cref{fig:chordal}. Consequently, the class of chordal digraphs excluding these subdigraphs is cop-win when the cop has strong push ability. Moreover, Das et al.~\cite{das2023cops} proved that if $\overrightarrow{G}$ is an orientation of an interval graph, then $\csp{G} = 1$. This naturally raises the open question of whether all orientations of all chordal graphs are cop-win under strong push ability. We  suspect this to be the case. 
\begin{figure}
    \centering
    \includegraphics[width=0.75\linewidth]{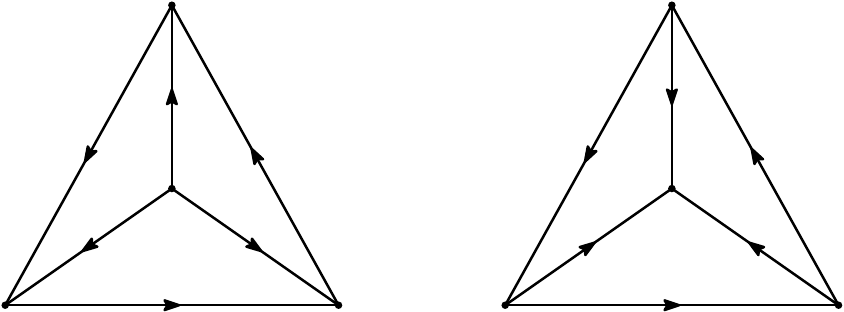}
    \caption{A chordal digraph can be pushed to be a DAG iff it does not contain any of the two orientations of $K_4$ as subdigraphs.}
    \label{fig:chordal}
\end{figure}

As discussed in \cref{S:intro}, triangle-free planar graphs are strong push cop-win. One may wonder if all planar graphs are cop-win when the cop has the strong push ability. Another interesting direction is to extend our result to $4$-degenerate graphs or graphs with maximum degree $5$.

While a broad class of graphs is shown to be strong push cop-win, we currently lack an explicit example of a graph $\overrightarrow{G}$ with $\csp{G} > 1$. Thus, a compelling direction for future work is to construct such graphs or, in a less likely but intriguing scenario, prove that every graph is cop-win under this model.

\section{Acknowledgments}
This research was supported by the IDEX-ISITE initiative CAP 20-25 (ANR-16-IDEX-0001), the International Research Center ``Innovation Transportation and Production Systems'' of the I-SITE CAP 20-25, and the ANR project GRALMECO (ANR-21-CE48-0004). We thank the anonymous reviewers for their thorough and constructive feedback, which significantly improved both the presentation and the technical clarity of this paper.
\bibliographystyle{alpha}
\bibliography{main}

\end{document}